\newcolumntype{K}[1]{>{\centering\arraybackslash}p{#1}}
\setlist[itemize,1]{label=$\diamond$}
\setlist[itemize,2]{label=$-$}
\journalname{JOTA}
\begin{document}

\title{Exact Worst-Case Convergence Rates of the Proximal Gradient Method for Composite Convex Minimization 
}

\titlerunning{Exact worst-case convergence rates of the proximal gradient method}
\author{
  Adrien B. Taylor
  \and
  Julien M. Hendrickx
  \and
  Fran\c cois Glineur
}

\institute{Adrien B. Taylor (\Letter) \at
             Universit\'e catholique de Louvain, ICTEAM \\
              B-1348 Louvain-la-Neuve, Belgium\\
              adrienbtaylor@gmail.com 
           \and
           Julien M. Hendrickx \at
             Universit\'e catholique de Louvain, ICTEAM \\
              B-1348 Louvain-la-Neuve, Belgium\\
              julien.hendrickx@uclouvain.be
           \and
           Fran\c cois Glineur  \at
             Universit\'e catholique de Louvain, ICTEAM/CORE \\
              B-1348 Louvain-la-Neuve, Belgium\\
              francois.glineur@uclouvain.be
}

\newcommand{\pinf}{\infty}
\newcommand{\eqdef}{\overset{\text{def}}{=}}
\newcommand{\cinf}{\cup\left\{\pinf\right\}}
\newcommand{\R}{\mathbb{R}}
\newcommand{\Rinf}{\mathbb{R}\cinf}
\newcommand{\Rp}{\R^+}
\newcommand{\Rd}{\mathbb{R}^d}
\newcommand{\E}{\mathbb{E}}
\newcommand{\Es}{\mathbb{E}^*}

\newcommand{\norm}[1]{{\left\lVert#1\right\rVert}}
\newcommand{\normsq}[1]{{\left\lVert#1\right\rVert^2}}

\newcommand{\relint}[1]{\mathrm{relint}\left(#1\right)}
\newcommand{\inner}[2]{{\left< #1, #2\right>}}
\newcommand{\innerS}[2]{{\langle #1, #2 \rangle}}
\newcommand{\innerE}[2]{{\langle #1, #2 \rangle}_{\E}}
\newcommand{\innerEs}[2]{{\langle #1, #2 \rangle}_{\Es}}
\newcommand{\argmin}[1]{\underset{#1}{\text{argmin }}}
\newcommand{\argmax}[1]{\underset{#1}{\text{argmax }}}
\newcommand{\prox}[2]{{\bf p}_{#1}\left(#2\right)}
\newcommand{\goto}[1]{\overset{\mu\rightarrow 0}{\rightarrow}#1}
\newcommand{\fdefsimple}{\E\rightarrow\R}
\newcommand{\fdef}{\E\rightarrow\Rinf}
\newcommand{\fdefzero}{\E\rightarrow\left\{0,\pinf \right\}}
\newcommand{\fsdef}{\Es\rightarrow\Rinf}
\DeclarePairedDelimiter{\ceil}{\lceil}{\rceil}

\newcommand{\secref}[1]{Section~\ref{#1}}
\newcommand{\figref}[1]{\figurename~\ref{#1}}

\newcommand{\rev}[1]{#1}

\newcommand{\defeq}{\rev{\colonequals}}
\newcommand{\DO}{{$\norm{x_N-x_\star}$}}
\newcommand{\OFA}{{$F(x_N)-F_\star$}}
\newcommand{\RGN}{{$\norm{{\nabla} f(x_N)+s_N}$}}
\newcommand{\DOt}{{distance to optimality}}
\newcommand{\OFAt}{{objective function accuracy}}
\newcommand{\RGNt}{{residual gradient norm}}
\date{Received: date / Accepted: date}

\maketitle

\begin{abstract}
We study the worst-case convergence rates of the proximal gradient method for minimizing the sum of a smooth strongly convex function and a non-smooth convex function, whose proximal operator is available. 

We establish the exact worst-case convergence rates of the proximal gradient method in this setting for any step size and for different standard performance measures: \OFAt, \DOt\ and residual gradient norm. 

The proof methodology relies on recent developments in performance estimation of first-order methods, based on semidefinite programming. In the case of the proximal gradient method, this methodology allows obtaining exact and non-asymptotic worst-case guarantees, that are conceptually very simple, although apparently new. 

On the way, we discuss how strong convexity can be replaced by weaker assumptions, while preserving the corresponding convergence rates. We also establish that the same fixed step size policy is optimal for all three performance measures. Finally, we extend recent results on the worst-case behavior of gradient descent with exact line search to the proximal case.

\end{abstract}
\keywords{proximal gradient method \and composite convex optimization \and convergence rates \and worst-case analysis.}
\subclass{90C25 \and 90C22 \and 90C20.}

\section{Introduction} \label{intro}

The proximal gradient method is a well-known extension to the standard gradient method for minimizing the sum of a smooth function with a non-smooth convex one. Numerous variants of this method were studied in the literature with a corresponding variety of results depending on the particular assumptions made on the optimization problems of interest.

In this work, we are concerned with the case where the smooth term is also strongly convex. In this situation, the proximal gradient method is known to converge linearly, as in the case of the gradient method (and its projected variant), whose convergence is already widely studied --- see among others~\cite[Section 1.4: Theorem 3]{Book:polyak1987},~\cite[Theorem 2.1.14]{Book:Nesterov},~\cite[Section 5.1]{primerryu} and~\cite[Section 4.4]{lessard2014analysis} (with slight variants in the assumptions: depending on whether the smooth function is required to be twice differentiable or not).

Our main contribution is to prove exact worst-case
rates of convergence of the method for the three standard optimality measures: distance to optimality, objective function accuracy and residual gradient norm. 
These tight rates were to the best of our knowledge only known for the first of these measures (see e.g.,~\cite[Section 1.4: Theorem 3]{Book:polyak1987}, \cite[Section 5.1]{primerryu} and \cite[Section 4.4]{lessard2014analysis}).
We also derive a tight worst-case guarantee for the proximal gradient method with exact line search.

Other related research trends feature linear convergence rates for the (proximal) gradient method under weaker assumptions than strong convexity, and linear convergence rates under inexact first-order information~\cite{schmidt2011convergence}. Among others, restricted strong convexity-type results are presented in~\cite{zhang2015restricted,necoara2015linear}, and convergence under {\color{red}{\L}ojasiewicz-type conditions were recently presented in~\cite{bolte2017error}}.

The paper is organized as follows: in Section~\ref{probstat}, we present the problem statement and the main results; in Section~\ref{secproofs}, we prove our main results (tight convergence rates for the method and for its exact line search variant); in Section~\ref{mixedperf}, we summarize known and newly derived tight results for the proximal gradient method, that were obtained using the performance estimation framework developed by Drori and Teboulle~\cite{Article:Drori} and the authors  (see~\cite{taylor2015smooth,taylor2015exact}). Finally, we conclude the work in Section~\ref{sec:ccl}.

\section{Problem Statement and Main Results}\label{probstat}
Let us consider the composite convex minimization (CCM) setting
\begin{equation}
 \min_{x\in\Rd} \left\{F(x)\defeq f(x)+h(x)\right\}\tag{CCM}\label{eq:SSOpt}
\end{equation}
where $h:\Rd\rightarrow\R\cup\{\pinf\}$ is convex, closed and proper and $f:\Rd\rightarrow\R$ is $L$-smooth $\mu$-strongly convex, closed and proper for some $0<\mu\leq L<\pinf$. That is, we assume that $f$ is differentiable over the whole $\Rd$ and satisfies the following conditions (using the Euclidean norm $\norm{.})$:
\begin{itemize}
\item $L$-smoothness:

$\norm{\nabla f(x+\Delta x)-\nabla f(x)}\leq L\norm{\Delta x}$ holds for all $x\in\Rd$ and $\Delta x\in\Rd$ ,
\item $\mu$-strong convexity:

$f(x)-\frac{\mu}{2}\normsq{x}$ is a convex function on $\Rd$.

\end{itemize}
In the sequel, we use the notation $\mathcal{F}_{0,\infty}(\Rd)$ to denote the class of closed and proper convex functions, and the notation $\mathcal{F}_{\mu,L}(\Rd)$ for the class of closed and proper $L$-smooth $\mu$-strongly convex functions.

In addition, we assume that we can evaluate the gradient of $f$ and the proximal operator of $h$~\cite[Section 1.1]{parikh2013proximal} at any $x \in \Rd$:
\begin{equation}
\prox{\gamma h}{x}\defeq\argmin{y\in\Rd}\left\{\gamma h(y)+\frac{1}{2}\normsq{x-y}\right\}.\tag{PROX}\label{eq:prox}
\end{equation} 
In this work, we focus on the proximal gradient method (PGM) with constant step length~$\gamma$ to solve~\eqref{eq:SSOpt}.\\[-.4cm]
\begin{center}
{\parbox{0.9\textwidth}{
        \textbf{Proximal gradient method (PGM)}
  \begin{itemize}
  \item[] Input: $x_0\in\Rd$, $f\in\mathcal{F}_{\mu,L}(\Rd)$, $h\in\mathcal{F}_{0,\infty}(\Rd)$, $0 \leq  \gamma \leq \frac{2}{L}$.
  \item[] For $k=0:N-1$
      \begin{align*}
    &x_{k+1}=\prox{\gamma h}{x_k-\gamma \nabla f(x_k)}
    \end{align*}
  \end{itemize}
       }}
\end{center}

The composite convex problem~\eqref{eq:SSOpt} admits the following very common particular cases:
\begin{itemize}
\item[-] the unconstrained minimization problem $\min_{x\in\R^d}  f(x)$, when $h(x)=0$ and $\prox{\gamma h}{x}=x$. In this case, PGM is simply the standard unconstrained gradient method (UGM) $x_{k+1}=x_k-\gamma\nabla f(x_k)$.
\item[-] The constrained minimization problem $\min_{x\in Q}  f(x)$, with $Q\subseteq \R^d$ a closed convex set. This corresponds to choosing $h(x)=i_Q(x)$ ($i_Q$ is the indicator function of $Q$) for which the proximal operation corresponds to a projection onto~$Q$: $\prox{\gamma i_Q}{x}=\Pi_Q (x)$. In this case, PGM is simply the standard projected gradient method ($\Pi$GM) $x_{k+1}=\Pi_Q(x_k-\gamma\nabla f(x_k))$. 
\item[-] The composite minimization problem $\min_{x\in\R^d} f(x)+h(x)$, where $h(x)$ has an analytical proximal operator available\footnote{A list of useful analytical proximal operators is available in~\cite{combettes2011proximal}.} (e.g., the classical $\ell_1$-regularization term $h(x)=\norm{x}_1$ and the corresponding soft-thresholding operator)).
\end{itemize}

\paragraph{Convergence rate} Consider Problem (CCM) with parameters $0<\mu\leq L<\infty$, and PGM with step size $\gamma$. Define the following quantity 
\begin{equation}
\rho(\gamma)\defeq\max\Bigl\{|1-L\gamma|,|1-\mu\gamma|\Bigr\}=\max\Bigl\{(L\gamma-1),(1-\mu\gamma)\Bigr\}\tag{RHO}\label{def:rho}.
\end{equation}
Observe that $\rho(\gamma)\geq 0$ for all values of the step size such that $0\leq \gamma\leq \frac{2}{L}$. The term $|1-L\gamma|$ in the expression of $\rho(\gamma)$ is minimized by taking the so-called \emph{short step} $1/L$, whereas the second term $|1-\mu\gamma|$ is minimized by choosing the so-called \emph{long step} $1/\mu$.

We prove in \secref{sec:tub} that applying PGM to Problem~\eqref{eq:SSOpt} produces a sequence of iterates converging to an optimal solution with a linear convergence rate, whatever the performance measure considered: \DOt, \RGNt\ and \OFAt. Moreover, that rate is equal to $\rho^2(\gamma)$ for all three measures. This implies that the best possible worst-case convergence rate is achieved by the step size $\rev{\gamma_\star} \defeq \frac{2}{L+\mu}$ for all three performance measures (see \figref{fig:rho}). Indeed, this step size minimizes $\rho(\gamma)$ (it corresponds to the case where both terms are equal), and the corresponding optimal rate is $\rev{\rho^{2}_{\star}} \defeq \rho^2(\rev{\gamma_\star}) = \bigl(\frac{L-\mu}{L+\mu}\bigr)^2$.

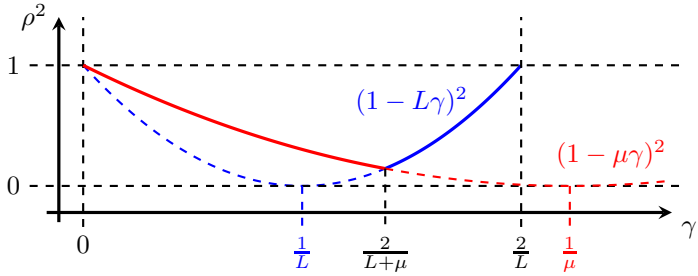
\begin{figure}[!ht]
\begin{center}
\begin{tikzpicture}[yscale=0.32,xscale=0.32]

\pgfmathsetmacro{\muv}{0.45};
\pgfmathsetmacro{\Lv}{1};
\pgfmathsetmacro{\AvLmu}{\muv/2+\Lv/2};
\pgfmathsetmacro{\rate}{(\Lv-\muv)^2/(\Lv+\muv)^2};

\draw[very thick, black, -stealth] (-10.5,-0.1) -- (15.2,-0.1);
\draw (15.2,-0.1) node[below right]{$\gamma$};
\draw[very thick, black, -stealth] (-10,-0.5) -- (-10,8);
\draw (-10.1,8) node[left]{$\rho^2$};
\draw(0,0) [very thick, color=blue, domain=9/\AvLmu-9:3*3, samples=50] plot({\x}, {5*(1-\Lv*(\x/9+1))^2+1});
\draw(0,0) [thick, dashed, color=blue, domain=-3*3:9/\AvLmu-9, samples=50] plot({\x}, {5*(1-\Lv*(\x/9+1))^2+1});
\draw[dashed, thick] (-9,-0.7) to (-9,8); 
\draw (-9,-0.7) node[below] {$0$};
\draw[dashed, thick] (9,-0.7) to (9,8); 
\draw (9,-0.7) node[below] {$\frac{2}{L}$};

\draw[dashed, thick] (-11.2,1) to (15.5,1);
\draw (-11.2,1) node[left] {$0$};

\draw[dashed, thick] (-11.2,6) to (15.5,6);
\draw (-11.2,6) node[left] {$1$};

\draw(0,0) [very thick, color=red, domain=-3*3:9/\AvLmu-9, samples=50] plot({\x}, {5*(1-\muv*(\x/9+1))^2+1});
\draw(0,0) [thick, dashed, color=red, domain=9/\AvLmu-9:15, samples=50] plot({\x}, {5*(1-\muv*(\x/9+1))^2+1});

\draw (12.7,2.3) node[red] {$(1-\mu\gamma)^2$};
\draw (4.5,4.5) node[blue] {$(1-L\gamma)^2$};

\draw[dashed, thick, red] (9/\muv-9,1) to (9/\muv-9,-0.7);
\draw (9/\muv-9,-0.7) node[below, red] {$\frac{1}{\mu}$};
\draw[dashed, thick, blue] (9/\Lv-9,1) to (9/\Lv-9,-0.7);
\draw (9/\Lv-9,-0.7) node[below, blue] {$\frac{1}{L}$};

\draw[dashed, thick] (9/\AvLmu-9,1+5*\rate) to (9/\AvLmu-9,-0.7);
\draw (9/\AvLmu-9,-0.7) node[below] {$\frac{2}{L+\mu}$};
\end{tikzpicture}
\caption{Rate of convergence $\rho^2(\gamma)$ as a function of the step size $\gamma$.}
\label{fig:rho}
\end{center}
\end{figure}

In the sequel, we will use notation $\tilde{\nabla}F(x)$ to denote a arbitrary subgradient of $F$ at $x$, i.e. $\tilde{\nabla}F(x)\in\partial F(x)$. It will also be convenient to define the following auxiliary quantity $s_k$ for every iteration of PGM  
\begin{equation}
 s_{k+1}\defeq\frac{x_k-x_{k+1}}{\gamma}-\nabla f(x_k).\label{def:sk}
\end{equation}
We will show in Section~\ref{sec:basic_ineq_and_notations} that $s_{k+1}\in\partial h(x_{k+1})$, i.e. that this quantity is a subgradient of $h$ (it is in fact the particular subgradient that appears in the optimality condition of the
proximal operator subproblem generating $x_{k+1}$). Another consequence is that $\nabla f(x_{k+1})+s_{k+1}\in\partial F(x_{k+1})$, i.e. $\nabla f(x_{k+1})+s_{k+1}$ is a particular subgradient of $F$ at $x_{k+1}$. Our results on the convergence of the residual gradient norm will actually characterize that particular subgradient.

Our main result can now be stated. \begin{theorem}
Let $f\in\mathcal{F}_{\mu,L}(\Rd)$ and $h\in\mathcal{F}_{0,\pinf}(\Rd)$, and consider the composite convex optimization problem~\eqref{eq:SSOpt} and a starting point $x_0\in\Rd$  satisfying $\partial F(x_0)\neq \emptyset$ (only required for the result in residual gradient norm). The iterates $\{ x_k \}_{k \ge 0}$ of PGM with step size $0\leq \gamma\leq \frac{2}{L}$ satisfy the following:
\begin{align*}
\begin{array}{lr}
\underset{\tiny\begin{array}{c}
f\in\mathcal{F}_{\mu,L}(\Rd)\\ h\in\mathcal{F}_{0,\pinf}(\Rd)\\ x_0\in\Rd
\end{array}}{\mathrm{max}} \left\{\frac{\normsq{x_{k}-x_\star}}{\normsq{x_0-x_\star}} \right\} &= \rho^{2k}(\gamma),\\
\underset{\tiny\begin{array}{c}
f\in\mathcal{F}_{\mu,L}(\Rd)\\ h\in\mathcal{F}_{0,\pinf}(\Rd) \\ x_0\in\Rd \end{array}}{\max} \left\{\frac{\normsq{\nabla f(x_{k})+s_{k}}}{||\tilde{\nabla}F(x_{0})||^2} \right\} &= \rho^{2k}(\gamma),\\
\underset{\tiny\begin{array}{c}
f\in\mathcal{F}_{\mu,L}(\Rd)\\ h\in\mathcal{F}_{0,\pinf}(\Rd) \\ x_0\in\Rd
\end{array}}{\max} \left\{\frac{F(x_{k})-F(x_\star)}{F(x_0)-F(x_\star)} \right\} &= \rho^{2k}(\gamma),
\end{array}
\end{align*}
where $x_\star\in\Rd$ denotes the (unique) optimal solution of~\eqref{eq:SSOpt}, $s_k$ denotes the subgradient used in the proximal operation to generate $x_k$ (\emph{see Equation~\eqref{def:sk}}), and $\tilde{\nabla}F(x_0)\in\partial F(x_0)$ denotes an arbitrary subgradient of $F$ at $x_0$.
\label{thm:conv_PGM_gen}
\end{theorem}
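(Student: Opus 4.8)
The plan is to get the three matching lower bounds from a single explicit instance, and the three upper bounds from increasingly delicate contraction-type arguments; the objective-accuracy bound is the one that genuinely needs the performance estimation machinery.

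\smallskip\noindent\textbf{Lower bounds.} I would use one one-dimensional instance for all three ratios at once: take $h\equiv 0$ and $f(x)=\tfrac a2 x^2$ with $a\in\{\mu,L\}$ chosen so that $|1-a\gamma|=\rho(\gamma)$. Then $x_*=0$, the proximal operator is the identity, PGM reduces to $x_{k+1}=(1-a\gamma)x_k$, and one reads off $x_k=(1-a\gamma)^kx_0$, $s_k=0$, $\nabla f(x_k)+s_k=a(1-a\gamma)^kx_0$, $\tilde{\nabla}F(x_0)=ax_0$, and $F(x_k)-F(x_*)=\tfrac a2(1-a\gamma)^{2k}x_0^2$. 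Each of the three ratios then equals $(1-a\gamma)^{2k}=\rho^{2k}(\gamma)$, so each maximum is at least $\rho^{2k}(\gamma)$. (In general $F=f+h$ is $\mu$-strongly convex, closed and proper, hence $x_*$ exists and is unique, which makes the normalizations meaningful.)

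\smallskip\noindent\textbf{Upper bound in distance to optimality.} This is the textbook contraction. The optimality condition $0\in\nabla f(x_*)+\partial h(x_*)$ makes $x_*$ a fixed point of the PGM operator $T(x)=\prox{\gamma h}{x-\gamma\nabla f(x)}$. The operator $x\mapsto\prox{\gamma h}{x}$ is nonexpansive, and for every $0\leq\gamma\leq\tfrac2L$ the map $x\mapsto x-\gamma\nabla f(x)$ is $\rho(\gamma)$-Lipschitz on $\mathcal F_{\mu,L}(\Rd)$ (a standard consequence of $L$-smoothness and $\mu$-strong convexity, since the "slopes" of $\gamma\nabla f$ lie in $[\gamma\mu,\gamma L]\subseteq[0,2]$). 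Hence $\norm{x_{k+1}-x_*}=\norm{T(x_k)-T(x_*)}\leq\rho(\gamma)\norm{x_k-x_*}$; iterating and squaring yields $\rho^{2k}(\gamma)$.

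\smallskip\noindent\textbf{Upper bound in residual gradient norm.} I would introduce $y_k\eqdef x_k-\gamma\nabla f(x_k)$. The optimality condition of the proximal subproblem gives $s_{k+1}=\gamma^{-1}\bigl((x_k-\gamma\nabla f(x_k))-x_{k+1}\bigr)\in\partial h(x_{k+1})$, so $\gamma\bigl(\nabla f(x_{k+1})+s_{k+1}\bigr)=y_k-y_{k+1}$, while $y_{k+1}=G(y_k)$ with $G(y)\eqdef\prox{\gamma h}{y}-\gamma\nabla f\bigl(\prox{\gamma h}{y}\bigr)$, which by the two facts above is $\rho(\gamma)$-Lipschitz. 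For the first step I would pick $s_0\in\partial h(x_0)$ with $\nabla f(x_0)+s_0=\tilde{\nabla}F(x_0)$ (possible since $\partial F(x_0)=\nabla f(x_0)+\partial h(x_0)$) and set $w\eqdef x_0+\gamma s_0$; then $\prox{\gamma h}{w}=x_0$, hence $G(w)=y_0$ and $w-G(w)=\gamma\tilde{\nabla}F(x_0)$. Thus $y_j=G^{j+1}(w)$ for all $j\geq0$, and for $k\geq1$ (the case $k=0$ being trivial), $\gamma\norm{\nabla f(x_k)+s_k}=\norm{G^{k}(w)-G^{k+1}(w)}\leq\rho^{k}(\gamma)\,\norm{w-G(w)}=\rho^{k}(\gamma)\,\gamma\norm{\tilde{\nabla}F(x_0)}$; squaring gives the claim.

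\smallskip\noindent\textbf{Upper bound in objective accuracy --- the main obstacle.} Here the elementary estimates fall short of the exact constant: combining the descent lemma with the gradient-dominance inequality for the $\mu$-strongly convex $F$, or sandwiching $\tfrac\mu2\norm{x_k-x_*}^2\leq F(x_k)-F(x_*)\leq\tfrac L2\norm{x_k-x_*}^2$ with the distance rate, only yields $\rho^{2k}(\gamma)$ up to a factor of order $L/\mu$ --- every Cauchy--Schwarz step loses in the wrong direction. To pin down the constant $1$ I would invoke the performance estimation framework: express the worst-case one-step decrease $\sup\{(F(x_{k+1})-F(x_*))/(F(x_k)-F(x_*))\}$ as a semidefinite program, encoding $f$ and $h$ through the exact interpolation conditions of $\mathcal F_{\mu,L}(\Rd)$ and $\mathcal F_{0,\infty}(\Rd)$ together with the update relation $s_{k+1}=\gamma^{-1}(x_k-x_{k+1})-\nabla f(x_k)\in\partial h(x_{k+1})$ and the optimality condition $\nabla f(x_*)+s_*=0$; solving it shows the value is $\rho^2(\gamma)$, attained at a rank-one point --- precisely the one-dimensional quadratic of the first paragraph --- and the corresponding dual solution is an explicit nonnegative combination of those interpolation inequalities proving $F(x_{k+1})-F(x_*)\leq\rho^2(\gamma)\bigl(F(x_k)-F(x_*)\bigr)$ as an algebraic identity. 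Telescoping this over $k$ gives $F(x_k)-F(x_*)\leq\rho^{2k}(\gamma)(F(x_0)-F(x_*))$. I expect the real work to be exhibiting and verifying that dual certificate in closed form, and checking that it degrades correctly in the boundary cases $\gamma\in\{0,\tfrac2L\}$ and $\mu\to0$.
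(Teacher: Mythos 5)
Your lower bounds coincide with the paper's (Section~\ref{sec:quadLB}): the one-dimensional quadratics $\tfrac{\mu}{2}x^2$ and $\tfrac{L}{2}x^2$ with $h\equiv 0$, whichever realizes $\rho(\gamma)$, and this part is fine. For the first two upper bounds you take a genuinely different route from the paper. The paper proves all three one-step contractions by exhibiting explicit nonnegative multipliers on the interpolation inequalities of $\mathcal{F}_{\mu,L}(\Rd)$ and $\mathcal{F}_{0,\infty}(\Rd)$ and reducing the weighted sum to a sum of squares (Theorems~\ref{thm:convg_GM_x}, \ref{thm:convg_GM_G}, \ref{thm:convg_GM_f}). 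You instead argue by operator composition: $\prox{\gamma h}{\cdot}$ is nonexpansive and $I-\gamma\nabla f$ is $\rho(\gamma)$-Lipschitz (note this needs the standard coercivity inequality for $\mathcal{F}_{\mu,L}$, not just the ``slopes in $[\gamma\mu,\gamma L]$'' heuristic, but it is a known fact), so the PGM map contracts distances to its fixed point $x_*$; and your observation that $\gamma(\nabla f(x_{k+1})+s_{k+1})=y_k-y_{k+1}$ with $y_{k+1}=G(y_k)$ for the $\rho(\gamma)$-Lipschitz map $G=(I-\gamma\nabla f)\circ\prox{\gamma h}{\cdot}$, seeded at $w=x_0+\gamma s_0$ so that $\prox{\gamma h}{w}=x_0$ and $w-G(w)=\gamma\tilde{\nabla}F(x_0)$, is a clean and correct replacement for the paper's consecutive-iterates certificate in Theorem~\ref{thm:convg_GM_G}. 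Both routes are sound; the paper's makes explicit which inequalities are actually used (hence its discussion of weaker-than-strong-convexity assumptions), while yours is shorter and makes the fixed-point structure transparent.

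The gap is the objective-accuracy bound. You correctly diagnose that elementary sandwiching loses a factor of order $L/\mu$ and that a performance-estimation dual certificate is needed, but you stop at asserting that solving the SDP ``shows the value is $\rho^2(\gamma)$'' --- which is precisely the claim to be proved, and is the main new technical content of the paper. Theorem~\ref{thm:convg_GM_f} supplies the certificate explicitly: multipliers $\lambda_0=\rho$, $\lambda_1=(1-\rho)\rho$, $\lambda_2=1-\rho$, $\lambda_3=\rho^2$, $\lambda_4=1-\rho^2$ on five interpolation inequalities (three for $f$ among $x_k$, $x_{k+1}$, $x_*$ and two for $h$), together with a sum-of-squares reformulation whose leading coefficients involve a quadratic $\alpha(\gamma)$ whose nonnegativity must be checked separately on each of the two step-size regimes. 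Without exhibiting and verifying such an identity (or an equivalent one), the third equality of the theorem is not established; everything else in your proposal is.
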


In addition, the following corollary directly uses Theorem~\ref{thm:conv_PGM_gen} to extend the recent results of~\cite[Theorem 1.2]{deKlerkELS2016} on the exact worst-case complexity of the gradient descent with exact line search. As in the unconstrained case ($h(x)=0$), this result cannot be improved in general, as it is attained by a two-dimensional quadratic example~\cite[Example 1.3]{deKlerkELS2016}.\\[-.4cm]
\begin{center}
	{\parbox{0.9\textwidth}{
			\textbf{Proximal gradient method with exact line search}
			\begin{itemize}
				\item[] Input: {$x_0\in\Rd$}, $f\in\mathcal{F}_{\mu,L}(\Rd)$, $h\in\mathcal{F}_{0,\infty}(\Rd)$.
				\item[] For $k=0:N-1$
				\begin{align*}
				&\gamma=\argmin{\gamma\in\R} F\left[\prox{\gamma h}{x_k-\gamma \nabla f(x_k)}\right]\\
				&x_{k+1}=\prox{\gamma h}{x_k-\gamma \nabla f(x_k)}
				\end{align*}
			\end{itemize}
		}}
	\end{center}
\begin{corollary}{Let $f\in\mathcal{F}_{\mu,L}(\Rd)$, and $h\in\mathcal{F}_{0,\pinf}(\Rd)$ and consider the composite convex optimization problem~\eqref{eq:SSOpt} and some starting point $x_0\in\Rd$. The iterates of PGM with exact line search satisfy the following inequality:
\[ F(x_{k+1})-F_\star \leq \rho_\star^2 \left(F(x_{k})-F_\star\right).\]
\label{thm:convg_PGM_ELS_f}}
\end{corollary}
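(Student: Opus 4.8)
The plan is to exploit the defining property of exact line search: at every iteration it produces, in objective value, an iterate at least as good as the one produced by \emph{any} fixed admissible step size, and in particular by the optimal constant step $\gamma_*=\frac{2}{L+\mu}$, for which Theorem~\ref{thm:conv_PGM_gen} already gives the contraction factor $\rho_*^2$.

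First I would check that $\gamma_*=\frac{2}{L+\mu}$ is an admissible step size, i.e. $0\le \gamma_*\le \frac{2}{L}$, which holds as soon as $0<\mu\le L$. Then, fixing an iteration index $k$, let $\bar{x}_{k+1}=\prox{\gamma_* h}{x_k-\gamma_*\nabla f(x_k)}$ be the point that one PGM step with constant step $\gamma_*$ would generate from $x_k$. Applying the objective-accuracy statement of Theorem~\ref{thm:conv_PGM_gen} with a single iteration, step size $\gamma_*$, and starting point $x_k$ (the residual-gradient hypothesis $\partial F(x_0)\neq\emptyset$ is not needed here), and using $\rho(\gamma_*)=\frac{L-\mu}{L+\mu}$, yields
\[
F(\bar{x}_{k+1})-F_* \;\le\; \rho^2(\gamma_*)\bigl(F(x_k)-F_*\bigr) \;=\; \rho_*^2\bigl(F(x_k)-F_*\bigr).
\]

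It remains to compare $\bar{x}_{k+1}$ with the exact-line-search iterate $x_{k+1}$. By construction $x_{k+1}$ arises from a step size minimizing $\gamma\mapsto F\bigl[\prox{\gamma h}{x_k-\gamma\nabla f(x_k)}\bigr]$ over $\gamma\in\R$, and this feasible set contains $\gamma_*$; hence $F(x_{k+1})\le F(\bar{x}_{k+1})$. (Should one wish to avoid assuming the line-search minimum is attained, replace $F(x_{k+1})$ by the corresponding infimum throughout — the argument is unchanged.) Chaining this with the previous inequality gives $F(x_{k+1})-F_*\le \rho_*^2\bigl(F(x_k)-F_*\bigr)$, which is the claim. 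Tightness is then obtained by specializing the two-dimensional quadratic instance of~\cite[Example 1.3]{deKlerkELS2016}: choosing $h\equiv 0$ makes it a particular case of~\eqref{eq:SSOpt}, on which PGM with exact line search reduces to steepest descent with exact line search, whose worst-case rate is exactly $\rho_*^2$.

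I do not expect a genuine obstacle here: the corollary is essentially a corollary in the literal sense, and the only care needed is bookkeeping — verifying admissibility of $\gamma_*$ and phrasing the "line search dominates any fixed step" comparison cleanly (with an infimum if existence of the line-search minimizer is not postulated).
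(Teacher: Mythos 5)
Your proposal is correct and follows essentially the same route as the paper: apply the objective-accuracy bound of Theorem~\ref{thm:conv_PGM_gen} for one step with $\gamma=\gamma_*=\frac{2}{L+\mu}$, observe that the exact line search iterate does at least as well in objective value, and invoke the quadratic example of~\cite[Example 1.3]{deKlerkELS2016} for tightness. You merely spell out the domination argument and the admissibility check that the paper leaves implicit.
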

\begin{proof} This is exactly the result of Theorem~\ref{thm:conv_PGM_gen} (in terms of objective function accuracy) for the choice $\gamma=\gamma_\star$. The corresponding result is an upper bound on the worst-case of PGM with exact line search, which turns out to be tight on the quadratic example~\cite[Example 1.3]{deKlerkELS2016}~\cite[Example on p.69]{Book:Bertsekas_nlp}.
\end{proof}

\begin{remark}\label{remark:fromref}
As suggested by an anonymous referee, it is possible to perform exact line searches over other performance measures, and to deduce results similar to that of Corollary~\ref{thm:convg_PGM_ELS_f} for those methods.  For example, in the simple case $h(x)=0$, it directly follows from Theorem~\ref{thm:conv_PGM_gen} that the method using an exact line search over the residual gradient norm
\[ \alpha=\mathrm{argmin}_\alpha \{ \norm{\nabla f(x_{k}+\alpha\nabla f(x_k))}\}\]
and $x_{k+1}=x_k+\alpha\nabla f(x_k)$ satisfies
\[ \norm{\nabla f(x_{k+1})}^2\leq \rho_\star^2\norm{\nabla f(x_k)}^2,\]
and a similar result holds for the distance to optimality. The corresponding results for the composite case $h(x)\neq 0$ can also easily be deduced from the same reasoning. However, we do not know whether the upper bounds valid for those measures (with line search) are tight, due to a lack of matching lower bounds.
\end{remark}
\paragraph{Prior works} The UGM and $\Pi$GM are standard methods, whose analysis in the context of smooth strongly convex functions can be found in numerous references. Linear convergence in \DOt\ according to
\begin{equation}
\norm{x_{k+1}-x_\star}^2 \leq \rho^2(\gamma) \norm{x_{k}-x_\star}^2,\label{eq:prior_work1}
\end{equation}
can be found in e.g.,~\cite[Section 1.4: Theorem 3]{Book:polyak1987}, \cite[Section 5.1]{primerryu} and \cite[Section 4.4]{lessard2014analysis} for UGM and $\Pi$GM, with slight variations in the assumptions (depending on whether or not $f$ is required to be twice differentiable). For the specific step size $1/L$, the guarantee~\eqref{eq:prior_work1} can be found as a particular case of~\cite[Proposition 3]{schmidt2011convergence}. Weaker convergence rates such as $\rho(1/L)=(1-\frac{\mu}{L})$ for the specific step size $\gamma=1/L$ can be found in e.g.,~\cite[Theorem 2.2.8]{Book:Nesterov} or~\cite[Theorem 3.10]{bubeck2014theory} for $\Pi$GM. One can also check that~\eqref{eq:prior_work1} also holds for PGM, as it essentially follows the same proof technique as for $\Pi$GM (using the non-expansiveness of the proximal operation).

As far as we know, results in terms of \OFA\ or \RGN\ are typically not {as} emphasized (or known) as compared to convergence in terms of \DO. A standard technique to convert convergence results in terms of \DO\ to \RGN\ and \OFA\ consists in exploiting the smoothness and strong convexity assumptions (convergence in \RGN\ and in \OFA\ is then obtained as a by-product of the convergence in \DO). For example, in the {particular} case of unconstrained minimization (i.e., $h=0$), one can use:
\begin{align*}
f(x_k)-f(x_\star)\leq \frac{L}{2} \normsq{x_k-x_\star},\ \norm{\nabla f(x_k)}\leq L \norm{x_k-x_\star},\\
f(x_k)-f(x_\star)\geq \frac{\mu}{2} \normsq{x_k-x_\star},\ \norm{\nabla f(x_k)}\geq \mu \norm{x_k-x_\star},
\end{align*}
to adapt convergence rates in terms of \DOt\ to the following convergence results in \OFAt\ and \RGNt:
\begin{equation}
f(x_k)-f(x_\star)\leq \frac{L}{\mu}\rho^{2k}{(\gamma)} (f(x_0)-f(x_\star)), \text{ and } \norm{\nabla f(x_k)}\leq  \frac{L}{\mu}\rho^k{(\gamma)} \norm{\nabla f(x_0)}.\label{eq:weak_guarantee}
\end{equation}
However, the bounds~\eqref{eq:weak_guarantee} are not tight because of the leading constant $L/\mu$ (see Theorem~\ref{thm:conv_PGM_gen}). In addition,  we typically have $\frac{L}{\mu}\rho^{2k}>1$ for small values of $k$, and therefore the former inequality~\eqref{eq:weak_guarantee} does not even guarantee an improvement in terms of \OFAt\ or \RGNt\ for few iterations.

The global convergence rate $\rho^2\left(\rev{\gamma_\star}\right)=\left(\frac{L-\mu}{L+\mu}\right)^2$ in terms of \OFAt\ was only very recently obtained for UGM with step size $\rev{\gamma_\star}$ as a by-product of the convergence guarantee when using exact line search for solving unconstrained smooth convex minimization problems~\cite[Theorem 1.2]{deKlerkELS2016}, whereas previous results were only establishing a $\left(1-\frac{\mu}{L}\right)$ global convergence rate (see e.g.,~\cite{Book:Bertsekas_nlp}). Theorem~\ref{thm:conv_PGM_gen} further extends this result in the composite case~\eqref{eq:SSOpt} for the different convergence measures, for embedding a projection or a proximal step and for all reasonable step sizes.

\section{Convergence in Function Values, Residual Gradient and Distance to Optimum}\label{secproofs}
\subsection{Quadratic Lower Bounds}
\label{sec:quadLB}

First, we focus on the case of $f$ being quadratic without $h$ ($h=0$), which provides us with lower complexity bounds for the different values of the step size $\gamma$. Those quadratics correspond to lower bounds for UGM and therefore also for $\Pi$GM and PGM. We will show that those are tight for the class of smooth strongly convex functions in the following section.

Consider two constants $0< \mu \leq L < +\infty$ and the corresponding quadratic functions $f_\mu(x)\defeq\frac{\mu}{2} \normsq{x}$ and $f_L(x)\defeq\frac{L}{2}\normsq{x}$. We clearly have that $f_\mu,f_L\in\mathcal{F}_{\mu,L}(\Rd)$ and that $x_\star=0$ and $f_\star=0$ for both functions. In addition, one iteration of UGM on those functions respectively gives:
\begin{align*}
x_{k+1}^{(\mu)}=(1-\gamma \mu)x_k^{(\mu)}, \quad x_{k+1}^{(L)}=(1-\gamma L)x_k^{(L)},
\end{align*}
which respectively lead to 
\begin{equation*}
\begin{array}{ll}
 f_\mu(x_{k+1}^{(\mu)})=\frac{\mu}{2} (1-\gamma \mu)^2\normsq{x_k^{(\mu)}}, \quad &  \normsq{\nabla f_\mu(x_{k+1}^{(\mu)})}=(1-\gamma \mu)^2\normsq{\nabla f_\mu(x_k^{(\mu)})}, \\
f_L(x_{k+1}^{(L)})=\frac{L}{2} (1-\gamma L)^2\normsq{x_k^{(L)}},& {\normsq{\nabla f_L(x_{k+1}^{(L)})}}=(1-\gamma L)^2\normsq{\nabla f_L(x_k^{(L)})}.
\end{array}
\end{equation*}
Those equalities allow to conclude that the worst-case behaviour for any of the criteria $f(x_{k+1})-f_\star$, $\norm{x_{k+1}-x_\star}^2$ and $\norm{\nabla f(x_{k+1})}^2$ is at least as bad as in the cases of those two functions. That is, for any $\gamma\in\mathbb{R}$, there exists a $f\in\mathcal{F}_{\mu,L}(\Rd)$ such that all iterations ($k\geq 0$) of UGM statisfy:
\begin{align*}
\normsq{x_{k+1}-x_\star}&= \rho^{2}(\gamma) \normsq{x_{k}-x_\star},\\
f(x_{k+1})-f_\star&= \rho^{2}(\gamma)(f(x_{k})-f_\star),\tag{QLB}\label{eq:quad_lowerbounds}\\
\normsq{\nabla f(x_{k+1})} &= \rho^{2}(\gamma) \normsq{\nabla f(x_{k})}.
\end{align*}
We will see that that no other function within $f\in\mathcal{F}_{\mu,L}(\Rd)$ behaves (strictly) worse in the cases of interest.

\subsection{Basic Inequalities Characterizing one Iteration of PGM}
\label{sec:basic_ineq_and_notations}
Now, we make a short inventory of the inequalities available to prove the different global convergence rates. Recent works on performance estimation of first-order methods (see~\cite{taylor2015smooth,taylor2015exact})  guarantee that no other inequalities are needed in order to obtain the desired convergence results.

In the following, we denote by $g_k$ and $s_k$ the (sub)gradients of respectively the smooth function $f$ and the non-smooth component $h$ at the iteration $k$; that is $g_k\defeq\nabla f(x_k)$ and $s_k\in\partial h(x_k)$, and by $f_k$ and $h_k$ the function values at those points: $f_k\defeq f(x_k)$ and $h_k\defeq h(x_k)$. In addition to that, we denote by $x_\star$ the optimal point (unique by strong convexity of $F$) and by $g_\star\defeq\nabla f(x_\star)$ and $s_\star\in\partial h(x_\star)$ the gradient and some subgradient of respectively $f$ and $h$ at the optimum.
Let us list the (in)equalities that enable us to characterize one iteration of PGM.
\begin{itemize}
\item[(a)] The iteration $x_{k+1}=\prox{\gamma h}{x_k-\gamma\nabla f(x_k)}$ can be rewritten using necessary and sufficient optimality conditions on the definition of the proximal operation~\eqref{eq:prox}:\[x_{k+1}=x_k-\gamma (g_k+s_{k+1})\] for some $s_{k+1}\in\partial h(x_{k+1})$.
\item[(b)] Optimality of $x_\star$ for~\eqref{eq:SSOpt} amounts to requiring $g_\star+s_\star=0$ for some $s_\star\in\partial h(x_\star)$. 
\item[(c)] For characterizing smoothness and strong convexity in the case $0<\mu<L<\infty$, we use the conditions from~\cite[Theorem 4]{taylor2015smooth}. This should be required between three points: $x_{k}, x_{k+1}$ and $x_\star$. That is, 
for the six possible pairs $(i,j)$ within $\left\{k,k+1,\star\right\}$ 
we have:
\begin{align}
\begin{array}{ll}
f_i \geq  f_j &+ \inner{g_j}{x_i-x_j}+\frac{1}{2L}\normsq{g_i-g_j}\\
&+\frac{\mu}{2(1-\mu/L)}\normsq{x_i-x_j-\frac{{1}}{L}(g_i-g_j)}.
\end{array}
 \label{eq:interp_smooth}
\end{align}
Note that in the remaining case $L=\mu$,  function $f$ has to be of the form $f(x)=\frac L 2\normsq{x-x_\star}$, and hence \rev{all} results can easily be verified separately for this case.
\item[(d)]  Similarly, we require that for the same six pairs $(i,j)$:
\begin{align}
h_i-h_j-\inner{s_j}{x_i-x_j}\geq 0,\label{eq:interp_nonsmooth}
\end{align}
for characterizing the (possibly non-smooth) convex function $h$. 
\end{itemize}

\subsection{Upper Bounds on the Global Convergence Rates}
\label{sec:tub}

In this section, we prove the main convergence results of the paper, beginning with the convergence in terms of \DOt. Each proof of this section is provided with a symbolic verification code (link available at the end of Section~\ref{sec:ccl}).

\paragraph{Distance to optimality} As mentioned in Section~\ref{probstat}, the following convergence result in term of \DOt\ is not new. For the sake of clarity and completeness, we start by proving it using the same technique as for the subsequent results (on \RGNt\ and \OFAt). The proof methodology relies from the performance estimation methodology (see~\cite{Article:Drori,taylor2015smooth,taylor2015exact,deKlerkELS2016,drori2014contributions}).
 This technique has the advantage of being transparent and of explicitly identifying \emph{weaker assumptions} for obtaining this convergence property (see discussions below Theorem~\ref{thm:convg_GM_x}).
\begin{theorem}[Distance to optimality]
Consider the composite convex optimization problem~\eqref{eq:SSOpt}. Every pair of consecutive iterates of the PGM with $0\leq \gamma\leq \frac{2}{L}$ satisfies the following inequality:
\[ \normsq{x_{k+1}-x_\star} \leq\rho^2(\gamma) \normsq{x_k-x_\star}.\]
\label{thm:convg_GM_x}
\end{theorem}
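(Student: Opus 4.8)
The plan is to factor one proximal-gradient step into a forward (gradient) step $x\mapsto x-\gamma\nabla f(x)$ followed by a backward (proximal) step $y\mapsto\prox{\gamma h}{y}$, and to show that the forward step contracts distances to $x_*$ by the factor $\rho(\gamma)$ while the backward step is nonexpansive. Concretely, I would start from the optimality conditions recalled in Section~\ref{sec:basic_ineq_and_notations}: condition (a) gives $x_{k+1}=(x_k-\gamma g_k)-\gamma s_{k+1}$ with $s_{k+1}\in\partial h(x_{k+1})$, and condition (b) gives $x_*=(x_*-\gamma g_*)-\gamma s_*$ with $s_*=-g_*$. Writing $y_k\eqdef x_k-\gamma g_k$ and $y_*\eqdef x_*-\gamma g_*$, subtracting these identities yields $y_k-y_*=(x_{k+1}-x_*)+\gamma(s_{k+1}-s_*)$; adding the two instances of (d) between $x_{k+1}$ and $x_*$ gives $\inner{s_{k+1}-s_*}{x_{k+1}-x_*}\geq 0$, so expanding $\normsq{y_k-y_*}$ with the previous identity and using $\gamma\geq 0$ produces the nonexpansiveness bound $\normsq{x_{k+1}-x_*}\leq\normsq{y_k-y_*}$.

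It then remains to prove the contraction $\normsq{y_k-y_*}\leq\rho^2(\gamma)\normsq{x_k-x_*}$. Adding the two instances of the smoothness/strong-convexity inequality (c) between $x_k$ and $x_*$ makes the function values cancel and, after simplification, leaves the cocoercivity inequality $\inner{g_k-g_*}{x_k-x_*}\geq\frac{1}{L+\mu}\normsq{g_k-g_*}+\frac{\mu L}{L+\mu}\normsq{x_k-x_*}$, which is equivalent to $\normsq{(g_k-g_*)-\tfrac{L+\mu}{2}(x_k-x_*)}\leq\bigl(\tfrac{L-\mu}{2}\bigr)^2\normsq{x_k-x_*}$. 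Hence $g_k-g_*=\tfrac{L+\mu}{2}(x_k-x_*)+p$ with $\norm{p}\leq\tfrac{L-\mu}{2}\norm{x_k-x_*}$, so $y_k-y_*=\bigl(1-\gamma\tfrac{L+\mu}{2}\bigr)(x_k-x_*)-\gamma p$, and the triangle inequality gives $\norm{y_k-y_*}\leq\bigl(\bigl|1-\gamma\tfrac{L+\mu}{2}\bigr|+\gamma\tfrac{L-\mu}{2}\bigr)\norm{x_k-x_*}$; a short computation shows that for $0\leq\gamma\leq\tfrac{2}{L}$ this prefactor equals $\max\{|1-L\gamma|,|1-\mu\gamma|\}=\rho(\gamma)$, with the two branches corresponding to $\gamma\leq\tfrac{2}{L+\mu}$ and $\gamma\geq\tfrac{2}{L+\mu}$. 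Chaining the nonexpansiveness bound with this contraction proves the theorem.

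I expect the crux to be this second step, and within it the passage from (c) to a contraction estimate valid for a general (not necessarily twice differentiable) $f\in\mathcal{F}_{\mu,L}(\Rd)$, since the textbook argument via the spectrum of $I-\gamma\nabla^2 f$ is unavailable; the combined form of (c) above is precisely the ingredient that the performance-estimation framework isolates, and the whole statement can equivalently be recorded as a single nonnegative combination of (a)--(d) equal to $\rho^2(\gamma)\normsq{x_k-x_*}-\normsq{x_{k+1}-x_*}$, which is what the accompanying symbolic verification would check. Two minor points I would not overlook: the degenerate case $L=\mu$, where $f$ is forced to be $\tfrac{L}{2}\normsq{x-x_*}$ and the bound is immediate, must be handled separately as noted after condition (c); and I would verify that none of (a)--(d) is used with slack — equality holding simultaneously when $h=0$ and $f=\tfrac{L}{2}\normsq{x-x_*}$ or $f=\tfrac{\mu}{2}\normsq{x-x_*}$ — so that the bound matches the quadratic lower bounds of Section~\ref{sec:quadLB} and is tight as asserted in Theorem~\ref{thm:conv_PGM_gen}.
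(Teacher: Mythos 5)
Your proof is correct, and it takes a genuinely different route from the one printed in the paper, although it rests on exactly the same two aggregated inequalities. The paper's proof is a single algebraic certificate: it forms one weighted sum of the four interpolation inequalities (the two instances of (c) between $x_k$ and $x_*$ with equal multipliers $2\gamma\rho(\gamma)$, and the two instances of (d) between $x_{k+1}$ and $x_*$ with equal multipliers $2\gamma$) and verifies that this sum rearranges identically into $\rho^2(\gamma)\normsq{x_k-x_*}-\normsq{x_{k+1}-x_*}$ minus a sum of squares with nonnegative coefficients. You instead split the step into forward and backward halves: the summed (d) pair gives monotonicity of $\partial h$ and hence nonexpansiveness of the prox, while the summed (c) pair gives the cocoercivity inequality $\inner{g_k-g_*}{x_k-x_*}\geq\frac{1}{L+\mu}\normsq{g_k-g_*}+\frac{\mu L}{L+\mu}\normsq{x_k-x_*}$ (your algebra here is right --- it is equivalent to the paper's inequality~\eqref{eq:relax_dist}), which you convert into the contraction $\norm{y_k-y_*}\leq\rho(\gamma)\norm{x_k-x_*}$ via the recentering $g_k-g_*=\tfrac{L+\mu}{2}(x_k-x_*)+p$ and a triangle inequality; the two branches of $\rho(\gamma)$ fall out correctly according to the sign of $1-\gamma\tfrac{L+\mu}{2}$. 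What your route buys is transparency and modularity: it is the classical forward--backward argument (which the paper itself acknowledges in the ``Prior works'' paragraph as yielding~\eqref{eq:prior_work1} for PGM), made rigorous for non-twice-differentiable $f$ by replacing the spectral bound on $I-\gamma\nabla^2 f$ with the interpolation-based cocoercivity inequality, and it immediately exposes the same weakened hypotheses the paper highlights (only~\eqref{eq:relax_dist} and monotonicity of $\partial h$ are needed). What the paper's route buys is a single exact identity whose discarded square terms (e.g., $\gamma^2\normsq{g_*+s_{k+1}}$) are written out explicitly --- information your triangle inequality throws away --- and, more importantly, a template that carries over unchanged to the residual-gradient and function-value proofs, where no two-stage decomposition of this kind is available. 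Your side remarks are also apt: the case $L=\mu$ must indeed be treated separately because of the $1-\mu/L$ denominator in (c), and tightness is supplied by the quadratic lower bounds of Section~\ref{sec:quadLB} rather than by the upper-bound argument itself.
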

\begin{proof}
We use the notations and inequalities introduced in the previous section (Section~\ref{sec:basic_ineq_and_notations}) in order to construct the proof. As proposed in Section~\ref{sec:basic_ineq_and_notations}, we use some of the inequalities~\eqref{eq:interp_smooth} and~\eqref{eq:interp_nonsmooth} between the iterates and the optimal point. The proof consists in summing those interpolation inequalities after multiplying them with their respective coefficients\footnote{Those $\lambda$'s were found by identifying an analytical optimal solution to the dual performance estimation problem. That is, each $\lambda$ can be seen as a Lagrange multiplier for the corresponding inequality. The methodology is explained and illustrated in details in~\cite[Section 4.1]{deKlerkELS2016}.}, or multipliers, $\lambda$'s.

First, we use~\eqref{eq:interp_smooth} with respectively $(i,j)=(\star,k)$ and $(i,j)=(k,\star)$:
\begin{align*}
&&\begin{array}{ll}
f_\star\geq f_k&+\inner{g_k}{x_\star-x_k}+\frac{1}{2L}\normsq{g_k-g_\star} \\ & +\frac{\mu}{2(1-\mu/L)}\normsq{x_k-x_\star-\frac{{1}}{L}(g_k-g_\star)}
\end{array}
\quad &:\lambda_0,  \\
&&\begin{array}{ll}
f_k\geq f_\star&+\inner{g_\star}{x_k-x_\star}+\frac{1}{2L}\normsq{g_k-g_\star} \\ &+\frac{\mu}{2(1-\mu/L)}\normsq{x_k-x_\star-\frac{{1}}{L}(g_k-g_\star)}
\end{array}
\quad &:\lambda_1
\end{align*}
Then, we use~\eqref{eq:interp_nonsmooth} with respectively $(i,j)=(\star,k+1)$ and $(i,j)=(k+1,\star)$:
\begin{align*}
&& h_\star\geq h_{k+1}&+\inner{s_{k+1}}{x_\star-x_{k+1}} &:\lambda_2,\\
&& h_{k+1}\geq h_\star &+ \inner{s_\star}{x_{k+1}-x_\star}&:\lambda_3.
\end{align*}We use the following multipliers
\begin{align*}
\lambda_0=\lambda_1=2\gamma\rho(\gamma)\geq 0, \quad \lambda_2=\lambda_3=2\gamma\geq 0.
\end{align*}After appropriate substitutions of $x_{k+1}$ and $s_\star$, using $x_{k+1}=x_k-\gamma (g_k+s_{k+1})$ (Section~\ref{sec:basic_ineq_and_notations}, Condition (a)) and $s_\star=-g_\star$ (Section~\ref{sec:basic_ineq_and_notations}, Condition (b)), and with some effort, one can check that the previous weighted sum of inequalities can be written in one of the following forms. We divide the proof in two cases ({corresponding to the two regimes of $\rho(\gamma)$, see~\eqref{def:rho}}). In order to illustrate the reformulation procedure, verification for the first case is detailed explicitly in \ref{app:proofhelper}.
\begin{itemize}
\item  When $0\leq \gamma\leq \frac{2}{L+\mu}$ (i.e., $\rho(\gamma)=(1-\gamma\mu)$), the  expression can be written as (details in~\ref{app:proofhelper})
\begin{align*}
 \left(1-\gamma \mu \right)^2 \normsq{x_k-x_\star} \geq &\normsq{x_{k+1}-x_\star}  +\gamma^2 \normsq{g_\star+s_{k+1}}\\&+\frac{\gamma(2-\gamma (L+\mu))}{L-\mu} \normsq{\mu {(x_k  -x_\star)} - g_k+  g_\star},\\
 \geq &\normsq{x_{k+1}-x_\star},
\end{align*} where the last inequality follows from \[\gamma^2\geq 0,\ \gamma(2-\gamma (L+\mu))\geq 0,  \text{ and } L-\mu\geq 0.\]
\item Similarly, when $\frac{2}{L+\mu} \leq \gamma\leq \frac{2}{L}$ (i.e., $\rho(\gamma)=( L\gamma-1)$), the  expression is equivalent to
 \begin{align*}
 \left(1-\gamma L \right)^2 \normsq{x_k-x_\star} \geq &\normsq{x_{k+1}-x_\star} +\gamma^2 \normsq{g_\star+s_{k+1}}\\&+\frac{\gamma(\gamma (L+\mu)-2)}{L-\mu} \normsq{L{(x_k -x_\star)}- g_k +  g_\star},\\
 \geq &\normsq{x_{k+1}-x_\star},
\end{align*} where the last inequality follows from \[\gamma^2\geq 0, \ \gamma(\gamma (L+\mu)-2)\geq 0, \text{ and } L-\mu\geq 0.\]
\end{itemize}
Note that for any $\gamma$ such that $0\leq \gamma \leq \frac{2}{L}$, exactly\footnote{Actually, both regimes are valid for $\gamma=\frac{2}{L+\mu}$.} one of the two previous combinations of inequalities is valid (both multipliers and coefficients of the squared norms are positive). In addition, the valid expression corresponds to the maximum value between the two possible rates $(1-\gamma \mu)^2$ and $(1-\gamma L)^2$, which concludes the proof.
\end{proof}
Before moving to the next convergence result, note that only a subset of the available inequalities were used in the previous proof. In fact, any composite function $F$ for which the $f$ component satisfies $\forall x\in\Rd$:
\begin{align}
 \label{eq:relax_dist}\inner{\nabla f(x_\star)-\nabla f(x)}{x_\star-x}&\geq \frac{1}{L}\normsq{\nabla f(x)-\nabla f(x_\star)}\\ \notag &+\frac{\mu}{1-\mu/L}\normsq{x-x_\star-\frac{{1}}{L}(\nabla f(x)-\nabla f(x_\star))}, \end{align}(which is the sum of the two first inequalities used in the proof of Theorem~\ref{thm:convg_GM_x}, as equal multipliers $\lambda_0=\lambda_1$ are used) will have a PGM converging with the same rate in terms of {\DOt}, despite being potentially outside of $\mathcal{F}_{\mu,L}$. For example, consider the following quadratic function $f_A(x)=\frac{1}{2} x^\top A x$ with $\mu I\preceq A \preceq L I$ (hence $f\in\mathcal{F}_{\mu,L}$). Therefore,~\eqref{eq:relax_dist} holds and hence:
\begin{align*}
\left(1+\frac{\mu}{L}\right)(x_\star-x_k)^\top A(x_\star-x_k)\geq &\frac{1}{L}(x_k-x_\star)^\top A^\top A(x_k-x_\star)\\&+\mu(x_k-x_\star)^\top (x_k-x_\star).
\end{align*}
However, this inequality also holds in some cases where $0\preceq A \preceq L I$. For example, let $x_\star$ be the projection of $x_k$ onto the set of optimal solutions (i.e., $x_k-x_\star \perp \mathrm{Null}(A)$), and let $\mu>0$ correspond to the smallest non-zero eigenvalue of~$A$. In that case, the previous inequality is satisfied, although $f_A$ does not belong to $\mathcal{F}_{\mu,L}$, and the gradient method for minimizing $f_A$ converges with the same rate $\rho^2(\gamma)$ in terms of distance to an optimal point.

Also, we only need to require a monotonicity condition on $\partial h$ for keeping the same convergence guarantees, as only the sum of the third and fourth inequalities is used in the proof ($\lambda_2=\lambda_3$):
\[\inner{s_{k+1}-s_\star}{x_{k+1}-x_\star}\geq 0,\]
Those sorts of relaxations were further exploited in~\cite{zhang2015restricted,necoara2015linear} (relaxation of the strong convexity requirement, with motivational examples). We leave further investigations in that direction for future research.

\paragraph{Residual gradient norm} 
The next theorem is concerned with convergence in terms of \RGNt. Similar results can be obtained for the norm of the (composite) gradient mapping {(i.e.,$\frac{x_{k}-x_{k+1}}{\gamma}$)} instead\footnote{The difference between the gradient mapping and the \RGNt\ is simple, but somewhat subtle. The gradient mapping measures $\norm{\nabla f(x_k)+s_{k+1}}$, whereas the \rev{residual gradient norm} measures $\norm{\nabla f(x_{k+1})+s_{k+1}}$ with $s_{k+1}\in\partial h(x_{k+1})$ the subgradient used in the proximal operation.}, which is used in some standard references on composite minimization~\cite{Book:Nesterov,nesterov2007gradient}.

Convergence in terms of \RGNt\ is in fact {very natural}, as it is measurable in practice, as opposed to the \DOt, which requires the knowledge of $x_\star$ in order to be evaluated, or in terms of \OFAt, which requires the knowledge of (or a least a bound on) the true value of $F(x_\star)$.

\begin{theorem}[Residual gradient norm]
Let $f\in\mathcal{F}_{\mu,L}(\Rd)$, and $h\in\mathcal{F}_{0,\pinf}(\Rd)$, and consider the composite convex optimization problem~\eqref{eq:SSOpt}, and a feasible starting point $x_0\in\Rd$ (i.e., $x_0$ is such that $F(x_0)<\infty$) such that there exists $s_0\in\partial h(x_0)$. The iterates of PGM with $0\leq \gamma\leq \frac{2}{L}$ satisfy:
\[ \normsq{\nabla f(x_{k+1})+s_{k+1}} \leq \rho^2(\gamma) \normsq{\nabla f(x_k)+s_k},\]
with $s_k\in\partial h(x_k)$  (any subgradient of $h$ at $x_k$) and $s_{k+1}\in\partial h(x_{k+1})$, the subgradient of $h$ at $x_{k+1}$ used in the proximal operation (\emph{see Equation~\eqref{def:sk}}).
\label{thm:convg_GM_G}
\end{theorem}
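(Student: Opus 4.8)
The plan is to mimic the proof of Theorem~\ref{thm:convg_GM_x} almost verbatim, replacing the role of the displacement $x_k-x_*$ by the composite subgradient $\nabla f(x_k)+s_k\in\partial F(x_k)$. Writing $G_k\eqdef \nabla f(x_k)+s_k$ and $G_{k+1}\eqdef \nabla f(x_{k+1})+s_{k+1}$, and recalling from Condition~(a) of Section~\ref{sec:basic_ineq_and_notations} that $x_{k+1}=x_k-\gamma(\nabla f(x_k)+s_{k+1})$, the target inequality reads $\normsq{G_{k+1}}\leq \rho^2(\gamma)\normsq{G_k}$. Note that the optimal point plays no role here, so only (in)equalities relating the two consecutive iterates are needed.

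First I would collect the relevant interpolation (in)equalities between $x_k$ and $x_{k+1}$: the smoothness/strong convexity inequality~\eqref{eq:interp_smooth} for the two ordered pairs $(i,j)=(k,k+1)$ and $(i,j)=(k+1,k)$, and the monotonicity of $\partial h$, obtained by summing~\eqref{eq:interp_nonsmooth} for $(i,j)=(k,k+1)$ and $(i,j)=(k+1,k)$, namely $\inner{s_{k+1}-s_k}{x_{k+1}-x_k}\geq 0$. This last inequality holds for \emph{any} subgradient $s_k\in\partial h(x_k)$, which is precisely why the statement may use an arbitrary subgradient on the right-hand side; existence of at least one such $s_k$ holds automatically for $k\geq 1$ (since $x_k$ is then generated by a proximal step), and for $k=0$ by the feasibility assumption on $x_0$.

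Then I would take a weighted sum of these inequalities with nonnegative multipliers of the same flavour as in Theorem~\ref{thm:convg_GM_x} (proportional to $\gamma\rho(\gamma)$ for the two smoothness inequalities and to $\gamma$ for the monotonicity one; the precise values are read off from an analytical optimal solution of the dual performance estimation problem, as explained in~\cite[Section 4.1]{deKlerkELS2016}). After substituting $x_{k+1}=x_k-\gamma(\nabla f(x_k)+s_{k+1})$ and simplifying, the weighted combination should rearrange into an identity of the form
\[\rho^2(\gamma)\normsq{G_k}=\normsq{G_{k+1}}+\text{(nonnegative combination of squared norms)},\]
where, exactly as in the distance-to-optimality proof, the nonnegative coefficients are products of $\gamma$, of $\gamma(2-\gamma(L+\mu))$ in the regime $0\leq\gamma\leq\frac{2}{L+\mu}$ (where $\rho(\gamma)=1-\gamma\mu$) or of $\gamma(\gamma(L+\mu)-2)$ in the regime $\frac{2}{L+\mu}\leq\gamma\leq\frac{2}{L}$ (where $\rho(\gamma)=L\gamma-1$), and of $\frac{1}{L-\mu}$; the squared norms themselves should involve quantities such as $\nabla f(x_{k+1})-\nabla f(x_k)$ and $x_{k+1}-x_k-\frac1L(\nabla f(x_{k+1})-\nabla f(x_k))$. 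Splitting into these two cases and discarding the nonnegative remainder yields the claimed bound, and the degenerate case $L=\mu$ (where $f(x)=\frac L2\normsq{x-x_*}$) is checked separately by hand.

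The main obstacle is the same as for the other results of Section~\ref{sec:tub}: certifying the sum-of-squares rearrangement, i.e. exhibiting the exact multipliers and the exact squared-norm terms and verifying that the leftover is a nonnegatively-weighted sum of squares for every admissible step size $0\leq\gamma\leq\frac2L$ and every $0<\mu\leq L<\infty$. This is pure (if tedious) algebra, so I would carry it out symbolically and provide the accompanying verification code, as done throughout this section.
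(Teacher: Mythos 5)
Your strategy is exactly the paper's: combine the two smoothness/strong-convexity interpolation inequalities~\eqref{eq:interp_smooth} between the consecutive iterates $x_k$ and $x_{k+1}$ with the monotonicity of $\partial h$ between those same two points, substitute $x_{k+1}=x_k-\gamma(g_k+s_{k+1})$, and rearrange the weighted sum into $\rho^2(\gamma)\normsq{g_k+s_k}\geq\normsq{g_{k+1}+s_{k+1}}+(\text{nonnegative squares})$, split over the two regimes of $\rho(\gamma)$. Your side observations (the optimum plays no role; monotonicity accommodates an arbitrary $s_k\in\partial h(x_k)$, which is why the right-hand side may use any subgradient) also match the paper's discussion. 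The one substantive detail you get wrong is the multipliers: carrying over the ratio from the distance-to-optimality proof (smoothness weighted by a factor $\rho(\gamma)$ relative to monotonicity) does not work here. The correct weights are $\lambda_0=\lambda_1=\frac{2}{\gamma}\rho(\gamma)$ for the two smoothness inequalities and $\lambda_2=\lambda_3=\frac{2}{\gamma}\rho^2(\gamma)$ for the two subgradient inequalities, i.e.\ the monotonicity term must be weighted $\rho(\gamma)$ times \emph{more} heavily than the smoothness terms --- the reciprocal of the ratio you propose --- and with your ratio the leftover after extracting $\normsq{g_{k+1}+s_{k+1}}$ is not a nonnegative combination of squares. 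Since you explicitly defer the exact values to the dual performance estimation problem, this is recoverable rather than fatal, but it is the step on which the whole certificate hinges. For reference, in the regime $0\leq\gamma\leq\frac{2}{L+\mu}$ the resulting inequality reads
\[
(1-\gamma\mu)^2\normsq{g_k+s_k}\geq\normsq{g_{k+1}+s_{k+1}}+(1-\gamma\mu)^2\normsq{s_k-s_{k+1}}+\tfrac{2-\gamma(L+\mu)}{\gamma(L-\mu)}\normsq{g_k-g_{k+1}-\mu\gamma(g_k+s_{k+1})},
\]
whose squared terms are close in spirit to, but not exactly, the ones you anticipated (in particular the $\normsq{s_k-s_{k+1}}$ term).
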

\begin{proof}
We use the exact same reasoning as for Theorem~\ref{thm:convg_GM_x}: the
notations and inequalities are introduced in the previous section (Section~\ref{sec:basic_ineq_and_notations}), and the proof consists in summing the following interpolation inequalities after multiplication by their respective coefficients. The main difference lies in the choice of the inequalities to be combined. In this proof, we use conditions between consecutive iterates, instead of using conditions between the current iterates and the optimum:
{\begin{align*}
&\begin{array}{ll}
f_k\geq f_{k+1}&+\inner{g_{k+1}}{x_k-x_{k+1}}+\frac{1}{2L}\normsq{g_k-g_{k+1}}\\ &+\frac{\mu}{2(1-\mu/L)}\normsq{x_k-x_{k+1}-\frac{1}{L}(g_k-g_{k+1})} 
\end{array}\quad &:\lambda_0\\
&\begin{array}{ll}
f_{k+1}\geq f_k&+\inner{g_k}{x_{k+1}-x_k}+\frac{1}{2L}\normsq{g_k-g_{k+1}}\\& +\frac{\mu}{2(1-\mu/L)}\normsq{x_k-x_{k+1}-\frac{1}{L}(g_k-g_{k+1})} 
\end{array}
&:\lambda_1, \\
&h_k\geq h_{k+1} + \inner{s_{k+1}}{x_k-x_{k+1}}&:\lambda_2,\\
&h_{k+1}\geq h_k + \inner{s_k}{x_{k+1}-x_k}&:\lambda_3.
\end{align*}}We use the following multipliers:
\begin{align*}
\lambda_0=\lambda_1=\frac{2}{\gamma}\rho(\gamma)\geq 0,\quad
\lambda_2=\lambda_3=\frac{2}{\gamma}\rho^2(\gamma)\geq 0.
\end{align*}
After appropriate substitutions of $x_{k+1}$, using $x_{k+1}=x_k-\gamma (g_k+s_{k+1})$ (Section~\ref{sec:basic_ineq_and_notations}, Condition (a)), the previous weighted sum corresponds to a sum of squares in the two cases of interest ({same two regimes as $\rho(\gamma)$, see~\eqref{def:rho}}).
\begin{itemize}
\item When $0\leq \gamma\leq \frac{2}{L+\mu}$ (i.e., when $\rho(\gamma)=(1-\gamma\mu)$):
\begin{align*}
\left(1-\gamma \mu \right)^2 \normsq{g_k+s_k} \geq & \normsq{g_{k+1}+s_{k+1}} +\left(1-\gamma \mu \right)^2\normsq{s_k-s_{k+1}}\\ &+
\frac{2-\gamma (L+\mu)}{\gamma (L-\mu)} \normsq{g_k-g_{k+1}-\mu\gamma(g_k+s_{k+1})},\\
\geq &\normsq{g_{k+1}+s_{k+1}},
\end{align*} where the last inequality follows from 
\[ (1-\gamma \mu)^2\geq 0,\ 2-\gamma (L+\mu)\geq 0, \text { and } \gamma(L-\mu)\geq 0.\]
\item When $\frac{2}{L+\mu} \leq \gamma\leq \frac{2}{L}$ (i.e., when $\rho(\gamma)=(L\gamma -1)$):
 \begin{align*}
\left(1-\gamma L \right)^2 \normsq{g_k+s_k} \geq & \normsq{g_{k+1}+s_{k+1}} +\left(1-\gamma L \right)^2\normsq{s_k-s_{k+1}}\\&+
\frac{\gamma (L+\mu)-2}{\gamma (L-\mu)} \normsq{g_k-g_{k+1}-L\gamma(g_k+s_{k+1})},\\
\geq &\normsq{g_{k+1}+s_{k+1}},
\end{align*} and the last inequality follows from
\[(1-\gamma L)^2\geq 0,\ \gamma (L+\mu)-2\geq 0, \text{ and } \gamma(L-\mu)\geq 0.\]
\end{itemize}

We conclude the proof in the same way as for the distance to optimality: since, for any value of $\gamma$ such that $0\leq \gamma \leq \frac{2}{L}$, there is always one of the two previous combinations of inequalities that is valid (both multipliers and coefficients of the squared norms are positive), and since the valid one corresponds to the maximum value between the two possible rates $(1-\gamma \mu)^2$ and $(1-\gamma L)^2$, the desired statement is proved. \end{proof}
Interestingly, the inequalities used in this proof do not involve the optimal point, and only use the {information} available at the consecutive iterates. In addition, {note} that {as for the convergence in terms of \DOt,} $\lambda_0=\lambda_1$ tells us that the result hold\rev{s} under the following weaker assumption:
\begin{align*}
\inner{g_{k+1}-g_k}{x_{k+1}-x_k}\geq &\frac{1}{L}\normsq{g_k-g_{k+1}}\\&+\frac{\mu}{1-\mu/L}\normsq{x_k-x_{k+1}-\frac{{1}}{L}(g_k-g_{k+1})}.
\end{align*}
A consequence of this inequality is that one can benefit from using the locally better strong convexity and smoothness parameters (i.e., better constants $\mu$ and $L$ that satisfy this inequality for two consecutive iterates) instead of the global ones, in order to improve the convergence rate. {Also, it is possible to exploit this in order to make online estimations of the strong convexity and smoothness parameters $\mu$ and $L$} (we leave this for further research).

\paragraph{Objective function accuracy} 
{Finally, we consider convergence} in terms of \OFAt. The proof of this convergence rate is much more tedious than the previous ones, and seems to require more assumptions {(i.e., more inequalities appear to be needed --- of course it may also be that we just did not discover the simplest proof).}
 
\begin{theorem}[Objective function accuracy]
Let $f\in\mathcal{F}_{\mu,L}(\Rd)$, and $h\in\mathcal{F}_{0,\pinf}(\Rd)$, and consider the composite convex optimization problem~\eqref{eq:SSOpt}, and some starting point $x_0\in\Rd$. The iterates of PGM with $0\leq \gamma\leq \frac{2}{L}$ satisfy the following:
\[ F(x_{k+1})-F_\star \leq \rho^2(\gamma) \left(F(x_{k})-F_\star\right).\]
\label{thm:convg_GM_f}
\end{theorem}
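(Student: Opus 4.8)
The plan is to reuse verbatim the template of the proofs of Theorems~\ref{thm:convg_GM_x} and~\ref{thm:convg_GM_G}. I would select a subset of the interpolation inequalities~\eqref{eq:interp_smooth} (for $f$) and~\eqref{eq:interp_nonsmooth} (for $h$) relating the three points $x_k$, $x_{k+1}$ and $x_*$, assign to each a nonnegative multiplier $\lambda_i$, substitute the identities $x_{k+1}=x_k-\gamma(g_k+s_{k+1})$ and $g_*+s_*=0$ coming from Conditions (a)--(b) of Section~\ref{sec:basic_ineq_and_notations}, and verify that the resulting weighted sum rearranges into
\[ \rho^2(\gamma)\bigl(F(x_k)-F_*\bigr)-\bigl(F(x_{k+1})-F_*\bigr)\ \geq\ \text{(a nonnegative combination of squared norms)}\ \geq\ 0. \]
As before, the computation is carried out separately in the two regimes $0\leq\gamma\leq\tfrac{2}{L+\mu}$ (where $\rho(\gamma)=1-\mu\gamma$) and $\tfrac{2}{L+\mu}\leq\gamma\leq\tfrac{2}{L}$ (where $\rho(\gamma)=L\gamma-1$), which together cover $[0,\tfrac{2}{L}]$ and agree at $\gamma=\tfrac{2}{L+\mu}$; in each regime one checks that this combination has positive multipliers and positive squared-norm coefficients, and that it selects the larger of $(1-\mu\gamma)^2$ and $(1-L\gamma)^2$, i.e.\ $\rho^2(\gamma)$. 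The multipliers themselves would be obtained beforehand by solving the dual performance estimation SDP and recognising a closed-form optimum, exactly as in~\cite[Section~4.1]{deKlerkELS2016}; the written proof then only exhibits them and checks the algebraic identity, with a companion symbolic script corroborating it (cf.\ Section~\ref{sec:tub}).

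The genuinely new difficulty compared with the two earlier proofs is the bookkeeping of the function-value terms. There, the ``forward'' and ``reverse'' copies of each inequality carried equal multipliers ($\lambda_0=\lambda_1$, $\lambda_2=\lambda_3$), so $f_k,f_{k+1},f_*,h_k,h_{k+1},h_*$ all cancelled and the final inequality was purely geometric. Here, by contrast, we must engineer a net coefficient $\rho^2(\gamma)$ on $f_k-f_*$ and on $h_k-h_*$, a net coefficient $-1$ on $f_{k+1}-f_*$ and on $h_{k+1}-h_*$, and zero on everything else. This cannot be achieved with equal multipliers, so more of the available inequalities must enter --- in particular subgradient inequalities for $h$ involving $x_k$ (not only $x_{k+1}$ and $x_*$ as in Theorem~\ref{thm:convg_GM_x}), and smoothness/strong-convexity inequalities for $f$ between $x_k$ and $x_{k+1}$ in addition to those involving $x_*$ --- and the resulting algebra is substantially heavier, which is the ``much more tedious'' warning in the statement.

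Concretely I would: (i) list the chosen inequalities, namely~\eqref{eq:interp_smooth} for the ordered pairs among $\{k,k+1,*\}$ actually needed and~\eqref{eq:interp_nonsmooth} for the needed pairs among $\{k,k+1,*\}$, each with its multiplier; (ii) substitute $x_{k+1}=x_k-\gamma(g_k+s_{k+1})$ and $s_*=-g_*$ throughout; (iii) collect terms and confirm that, after moving $F(x_{k+1})-F_*$ to the left, the remainder is $\gamma^2$ (respectively $(1-\mu\gamma)^2$ or $(1-L\gamma)^2$) times squared norms of explicit vectors built from $x_k-x_*$, $g_k-g_*$, $g_k+s_{k+1}$ and $s_k-s_{k+1}$, with coefficients that are visibly nonnegative on the relevant range, each being a product of nonnegative factors such as $\gamma$, $L-\mu$, and $2-\gamma(L+\mu)$ (resp.\ $\gamma(L+\mu)-2$); (iv) conclude exactly as in Theorems~\ref{thm:convg_GM_x} and~\ref{thm:convg_GM_G}, observing that for every admissible $\gamma$ precisely one regime applies and it yields the maximum of the two candidate rates.

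I expect step (iii) --- equivalently, pinning down the exact family of inequalities and the precise closed-form multipliers that make all cross terms vanish and leave an honest sum of squares --- to be the main obstacle. Once the multipliers are known the verification is mechanical, if lengthy; without the guidance of the dual SDP it is far from obvious why this particular weighted combination works, which is presumably why the authors note that a simpler proof may well exist.
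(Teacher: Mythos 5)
Your plan follows exactly the paper's approach (a weighted sum of interpolation inequalities, split into the two regimes of $\rho(\gamma)$, reduced to a sum of squares), and your structural intuitions are sound: the paper does indeed use unequal multipliers so that the function values survive with net coefficients $\rho^2(\gamma)$ on $f_k-f_*$, $h_k-h_*$ and $-1$ on $f_{k+1}-f_*$, $h_{k+1}-h_*$, and it does bring in the pair $(k,k+1)$ for $f$ and an inequality for $h$ involving $x_k$. However, what you have written is a proof \emph{plan}, not a proof: the entire mathematical content of this theorem lies in exhibiting the specific certificate, and you explicitly defer that step (``I expect step (iii) \dots to be the main obstacle''). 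Without the concrete choice of inequalities and multipliers there is nothing to verify. For the record, the paper uses five inequalities --- \eqref{eq:interp_smooth} for the ordered pairs $(i,j)=(k,k+1)$, $(*,k)$, $(*,k+1)$ and \eqref{eq:interp_nonsmooth} for $(i,j)=(k,k+1)$ and $(*,k+1)$, both of the latter written with the subgradient $s_{k+1}$ --- with multipliers $\lambda_0=\rho(\gamma)$, $\lambda_1=(1-\rho(\gamma))\rho(\gamma)$, $\lambda_2=1-\rho(\gamma)$, $\lambda_3=\rho^2(\gamma)$, $\lambda_4=1-\rho^2(\gamma)$; note that only one direction of each pair is used, unlike the symmetric $\lambda_0=\lambda_1$, $\lambda_2=\lambda_3$ pattern of the earlier proofs.

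A second, smaller gap: you assert that the coefficients of the residual squared norms will be ``visibly nonnegative \dots each being a product of nonnegative factors such as $\gamma$, $L-\mu$, and $2-\gamma(L+\mu)$.'' In the actual decomposition this is not the case: the coefficients involve a quantity $\alpha(\gamma)$ (e.g.\ $\alpha(\gamma)=-(\gamma^2L^2\mu+2L(\gamma\mu-2)+\mu(\gamma\mu-2)^2)$ in the short-step regime) whose nonnegativity on the relevant interval requires a separate argument (concavity, or monotonicity, plus evaluation at the endpoints $\gamma=0$ and $\gamma=\tfrac{2}{L+\mu}$). So even granting the multipliers, the verification is not purely mechanical sign-checking of products of obvious factors. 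To turn your proposal into a proof you would need to (a) produce the multipliers above (or an equivalent set), (b) write out the explicit sum-of-squares reformulation in each regime, and (c) prove nonnegativity of the non-obvious leading coefficients.
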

\begin{proof}
We combine the following interpolation after multiplication with their respective coefficients:
 \begin{align*}
 &\begin{array}{ll}
f_k\geq f_{k+1}&+\inner{g_{k+1}}{x_k-x_{k+1}}+\frac{1}{2L}\normsq{g_k-g_{k+1}}\\ &+\frac{\mu}{2(1-\mu/L)}\normsq{x_k-x_{k+1}-\frac{1}{L}(g_k-g_{k+1})}  
 \end{array}
&:\lambda_0,\\
&\begin{array}{ll}
f_\star\geq f_k&+\inner{g_k}{x_\star-x_k}+\frac{1}{2L}\normsq{g_k-g_\star}\\ &+\frac{\mu}{2(1-\mu/L)}\normsq{x_k-x_\star-\frac{1}{L}(g_k-g_\star)} 
\end{array}
&:\lambda_1, \\
&\begin{array}{ll}
f_\star\geq f_{k+1}&+\inner{g_{k+1}}{x_\star-x_{k+1}}+\frac{1}{2L}\normsq{g_\star-g_{k+1}}\\
&+\frac{\mu}{2(1-\mu/L)}\normsq{x_\star-x_{k+1}-\frac{1}{L}(g_\star-g_{k+1})}
\end{array}
 &:\lambda_2,\\
&h_k\geq h_{k+1}+\inner{s_{k+1}}{x_k-x_{k+1}}&:\lambda_3,\\
&h_\star\geq h_{k+1} + \inner{s_{k+1}}{x_\star-x_{k+1}}&:\lambda_4.
\end{align*} We use the following multipliers:
\begin{align*}
\lambda_0=\rho(\gamma),\ \lambda_1=(1-\rho(\gamma))\rho(\gamma), \
\lambda_2=1-\rho(\gamma), \ \lambda_3=\rho^2(\gamma),\
 \lambda_4=1-\rho^2(\gamma).
\end{align*}Appropriate substitutions of $x_{k+1}$ and $s_\star$ using $x_{k+1}=x_k-\gamma (g_k+s_{k+1})$ (Section~\ref{sec:basic_ineq_and_notations}, Condition (a)) and $s_\star=-g_\star$ (Section~\ref{sec:basic_ineq_and_notations}, Condition (b)), we obtain that the weighted sum of inequalities is equivalent to the following expressions.
\begin{itemize}
\item When $0\leq \gamma\leq \frac{2}{L+\mu}$ (i.e., when $\rho(\gamma)=(1-\gamma\mu)$):
{\small
\begin{align*}
&\left(1-\gamma \mu\right)^2 \left(F(x_k)-F_\star\right) \\ &\geq  F(x_{k+1})-F_\star+\frac{(2-\gamma\mu)\beta(\gamma)}{2\alpha(\gamma)}\normsq{(1-\gamma\mu)g_k-g_{k+1}+\mu\gamma g_\star}\\
&+\frac{\gamma L\mu^2(2-\gamma\mu)}{2(L-\mu)}\normsq{(x_k-x_\star)-\frac{2L-2\mu+\gamma \mu^2}{L\mu(2-\gamma\mu)}s_{k+1}-\frac{g_k+g_{k+1}}{\mu (2-\gamma\mu)}+\frac{g_\star}{L}}\\
&+\frac{\gamma \mu\alpha(\gamma)}{2L(L-\mu)(2-\gamma\mu)}\normsq{ {\color{red}s_{k+1}}+\frac{(\mu\gamma-1)L\beta(\gamma)}{\alpha(\gamma)}g_k+\frac{L\beta(\gamma)}{\alpha(\gamma)}g_{k+1}+\frac{(L-\mu)(2-\gamma \mu)^2}{\alpha(\gamma)} g_\star},\\
&\geq F(x_{k+1})-F_\star,
\end{align*}}with $\alpha(\gamma)\defeq-(\gamma^2 L^2 \mu + 2L (-2+\gamma\mu)+\mu (-2+\gamma \mu)^2)$ and $\beta(\gamma)\defeq(2-\gamma(L+\mu))$, and where the last inequality follows from the signs of the leading coefficients. Indeed, note that $\alpha(\gamma)$ is positive for $0\leq \mu < L$ and $0\leq\gamma \leq \frac{2}{\mu+L}$, as $\alpha(\gamma)$ is  a concave quadratic function with $\alpha(0)\geq 0$ and $\alpha(\rev{\gamma_\star})=\frac{4L^2(L - \mu)}{(L + \mu)^2}\geq 0$.
\item When $\frac{2}{L+\mu} \leq \gamma\leq \frac{2}{L}$ (i.e., when $\rho(\gamma)=(L\gamma -1)$):
{\small \begin{align*}
&\left(1-\gamma L\right)^2 \left(F(x_k)-F_\star\right)\\& \geq  F(x_{k+1})-F_\star+\frac{(2-\gamma L)\beta(\gamma)}{2\gamma \alpha(\gamma)}\normsq{(1-\gamma L)g_k-g_{k+1}+ \gamma L g_\star}\\
&+\frac{\gamma L^2 \mu(2-\gamma L)}{2(L-\mu)}\normsq{(x_k-x_\star)-\frac{s_{k+1}}{\mu}+\frac{1-\gamma L-\gamma \mu}{\gamma L\mu}g_k-\frac{g_{k+1}}{\gamma L\mu}+\frac{g_\star}{L}}\\
&+\frac{\gamma \alpha(\gamma)}{2\mu (L-\mu)}\normsq{ s_{k+1}+\frac{(\gamma L-1)L\beta(\gamma)}{\gamma \alpha(\gamma)}g_k+\frac{L\beta(\gamma) }{\gamma \alpha(\gamma)}g_{k+1}+\frac{(2-\gamma L)(L-\mu)\mu}{\alpha(\gamma)} g_\star},\\
&\geq F(x_{k+1})-F_\star,
\end{align*}}with $\alpha(\gamma)\defeq(-2L^2-2\mu^2+2L\mu+\gamma L^3+\gamma L\mu^2)$ and $\beta(\gamma)\defeq(\gamma(L+\mu)-2)$, and where the last inequality follows from the signs of the leading coefficients. Again, $\alpha(\gamma)$ is nonnegative as it is an increasing linear function which is nonnegative in the region of interest $\gamma\geq \frac{2}{L+\mu}$. Indeed, one can check that by evaluating \rev{$\alpha(\gamma_\star)=2\mu^2\left(\frac{L-\mu}{L+\mu}\right)\geq 0$}.
\end{itemize}
We conclude the proof in the same way as before: among the two cases, the valid one corresponds to the maximum value between the two possible rates $(1-\gamma \mu)^2$ and $(1-\gamma L)^2$.
\end{proof}
\paragraph{Proof of Theorem~\ref{thm:conv_PGM_gen}} The Theorem is obtained by combining the upper bounds from Theorem~\ref{thm:convg_GM_x}, Theorem~\ref{thm:convg_GM_G} and Theorem~\ref{thm:convg_GM_f} with the lower bounds provided by the quadratic functions~\eqref{eq:quad_lowerbounds}.\qed

\section{\rev{Mixed Performance Measures and Sublinear Convergence Rates}} \label{mixedperf}
In this section, we consider \emph{mixed performance measures} for PGM, that is, we consider cases where the quality of the initial and final iterates $x_0$ and $x_k$ are measured using different criteria. For example, the quality of $x_0$ may be measured in terms of $\norm{x_0-x_\star}$ whereas the quality of $x_k$ may be measured in terms of $F(x_k)-F(x_\star)$. As we will see, this type of combination is particularly useful for obtaining \emph{sublinear} convergence rates in the smooth convex but \rev{non-strongly} convex case.

We first show that, for certain cases of {mixed} performance measures, Theorem~\ref{thm:conv_PGM_gen} can readily be used to obtain tight bounds. Then, we present additional results that were obtained \emph{numerically} using the performance estimation approach, for which we did not manage to find an analytical proof.

{\begin{proposition}
Consider the composite convex optimization problem~\eqref{eq:SSOpt}, and a feasible  point $x\in\Rd$ (i.e. $x$ is such $F(x)<\infty$) such that $s\in\partial h(x)$. The following inequalities are satisfied:
\begin{itemize}
\item[(i)] {$\normsq{x-x_\star}\leq \frac{1}{\mu^2} \normsq{\nabla f(x)+s}$,}
\item[(ii)] {$F(x)-F_\star\leq \frac{1}{2\mu} \normsq{\nabla f(x)+s} $.}
\item[(iii)] {$\normsq{x-x_\star}\leq \frac{2}{\mu} (F(x)-F(x_\star))$,}
\end{itemize}
\label{rem:easy_str_cvxty}
\end{proposition}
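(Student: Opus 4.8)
The plan is to derive all three inequalities from a single ingredient: the $\mu$-strong convexity inequality for $F=f+h$, obtained by adding the $\mu$-strong convexity inequality for $f$ to the subgradient inequality for $h$. Writing $g\eqdef\nabla f(x)+s\in\partial F(x)$, and using that $\nabla f(x_*)+s_*=0$ by optimality of $x_*$ (Condition (b) of Section~\ref{sec:basic_ineq_and_notations}), one has for every $y\in\Rd$
\[
 F(y)\geq F(x)+\inner{g}{y-x}+\frac{\mu}{2}\normsq{y-x}
 \qquad\text{and}\qquad
 F(y)\geq F_*+\frac{\mu}{2}\normsq{y-x_*},
\]
the second being the same statement written at $x_*$ with its zero subgradient.

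For (iii) I would simply evaluate the second inequality at $y=x$, which immediately gives $F(x)-F_*\geq\frac{\mu}{2}\normsq{x-x_*}$. For (ii) I would minimize the right-hand side of the first inequality over $y\in\Rd$: the minimizer is $y=x-\frac{1}{\mu}g$, the minimal value is $F(x)-\frac{1}{2\mu}\normsq{g}$, and since $F_*=\min_y F(y)$ this yields $F_*\geq F(x)-\frac{1}{2\mu}\normsq{g}$, i.e.\ (ii). (Equivalently, plug $y=x_*$ into the first inequality and maximize the bound $\inner{g}{x-x_*}-\frac{\mu}{2}\normsq{x-x_*}\leq\norm{g}\,\norm{x-x_*}-\frac{\mu}{2}\normsq{x-x_*}$ over $\norm{x-x_*}$.)

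For (i) I would add the first inequality at $y=x_*$ to the second at $y=x$; the function values cancel, leaving the strong monotonicity estimate $\inner{g}{x-x_*}\geq\mu\normsq{x-x_*}$. Combining this with Cauchy--Schwarz gives $\norm{g}\,\norm{x-x_*}\geq\mu\normsq{x-x_*}$, hence $\norm{g}\geq\mu\norm{x-x_*}$ (trivially true when $x=x_*$), which squares to (i).

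None of these steps is genuinely hard; the only point that deserves a word of care is the justification that $\nabla f(x)+s$ is a legitimate subgradient of $F$ and that $0\in\partial F(x_*)$, which here are immediate since $f$ is differentiable on all of $\Rd$ (so $\partial F=\nabla f+\partial h$, no constraint qualification needed) and $x_*$ is the unique minimizer. As an alternative exposition, the three inequalities can be read off directly from the interpolation inequalities (c) and (d) of Section~\ref{sec:basic_ineq_and_notations} applied to the pair $\{x,x_*\}$ with suitable nonnegative multipliers, in the same spirit as the proofs of Section~\ref{sec:tub}; I would use whichever presentation is shorter.
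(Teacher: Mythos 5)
Your proof is correct and follows essentially the same route as the paper: all three bounds are consequences of the $\mu$-strong convexity of $F$ together with the optimality condition $\nabla f(x_*)+s_*=0$. The only difference is presentational --- where the paper imports (i) and (ii) as known inequalities (the latter via Fenchel conjugation between smoothness and strong convexity), you derive them directly from the quadratic lower bound, via strong monotonicity plus Cauchy--Schwarz for (i) and minimization over $y$ for (ii), which is a self-contained rendering of the same facts.
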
}
\begin{proof}All the inequalities used in the proofs are either \emph{known} inequalities (see e.g.,~\cite[Section 2.5.1]{T2017peps}), or can be deduced from known inequalities using duality between smoothness and strong convexity via Fenchel conjugation (see~\cite[Theorem 2.34]{T2017peps}  for more details).

(i) By strong convexity of $F$, we have
\[ \normsq{\nabla f(x)+s-\nabla f(x_\star)-s_\star}\geq \mu^2\normsq{x-x_\star},\]
with $s_\star\in\partial h(x_\star)$ such that $\nabla f(x_\star)+s_\star=0$. Therefore
\begin{align*}
\normsq{x-x_\star}&\leq \frac{1}{\mu^2} \normsq{\nabla f(x)+s}.
\end{align*}
(ii) By strong convexity of $F$ (and feasibility of $x$), and by using Fenchel duality between smoothness and strong convexity (see~\cite[Theorem 2.34]{T2017peps}), we have
\begin{align*}
\inner{\nabla f(x_\star)+s_\star}{x}-F(x_\star)\leq &\inner{\nabla f(x)+s}{x}-F(x)\\&+\inner{\nabla f(x_\star)+s_\star-\nabla f(x)-s}{x}\\&+ \frac{1}{2\mu}\normsq{\nabla f(x)+s-\nabla f(x_\star)-s_\star}.
\end{align*}
with $s_\star\in\partial h(x_\star)$ such that $\nabla f(x_\star)+s_\star=0$, so that we obtain:
\[ F(x)-F_\star\leq \frac{1}{2\mu}\normsq{\nabla f(x)+s-\nabla f(x_\star)-s_\star}=\frac{1}{2\mu}\normsq{\nabla f(x)+s}.\]
{(iii)  Again, by strong convexity of $F$, we have:
\begin{align*}
F(x)&\geq F_\star + \inner{\nabla f(x_\star)+s_\star}{x-x_\star}+\frac{\mu}{2}\normsq{x-x_\star}
\end{align*}
with $s_\star\in\partial h(x_\star)$ such that $\nabla f(x_\star)+s_\star=0$, and we obtain the statement.}
\end{proof}

\begin{theorem}
Consider the composite convex optimization problem~\eqref{eq:SSOpt} and a feasible starting point $x_0\in\Rd$ (i.e. $x_0$ is such $F(x_0)<\infty$) such that $s_0\in\partial h(x_0)$. {The iterates of PGM satisfy the following inequalities:}
\begin{itemize}
\item[(i)]{$\normsq{x_k-x_\star}\leq \frac{\rho^{2k}(\gamma)}{\mu^2} \normsq{\nabla f(x_0)+s_0}$,}
\item[(ii)] {$F(x_k)-F(x_\star)\leq \frac{\rho^{2k}(\gamma)}{2\mu} \normsq{\nabla f(x_0)+s_0} $,}
\item[(iii)] $\normsq{x_k-x_\star}\leq \frac{2\rho^{2k}(\gamma)}{\mu} (F(x_0)-F(x_\star))$.
\end{itemize}
\label{thm:convg_GM_for_LS}
\end{theorem}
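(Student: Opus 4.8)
The plan is to obtain each of the three inequalities by composing two ingredients that are already available: the tight single-measure linear rates of Theorem~\ref{thm:conv_PGM_gen}, and the pointwise comparisons between performance measures from Proposition~\ref{rem:easy_str_cvxty}, applied at the iterate $x_k$. Observe first that since $f$ is differentiable on $\Rd$ and, by hypothesis, there exists $s_0\in\partial h(x_0)$, the vector $\nabla f(x_0)+s_0$ belongs to $\partial F(x_0)$; in particular $\partial F(x_0)\neq\emptyset$, so the residual-gradient-norm case of Theorem~\ref{thm:conv_PGM_gen} is applicable with the specific choice $\tilde{\nabla}F(x_0)=\nabla f(x_0)+s_0$.

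For (i), I would start from Theorem~\ref{thm:conv_PGM_gen} in residual gradient norm, reading the stated maximum as an upper bound valid for every admissible instance, which gives $\normsq{\nabla f(x_k)+s_k}\le\rho^{2k}(\gamma)\normsq{\nabla f(x_0)+s_0}$, where $s_k$ is the subgradient generated in the proximal step (see Equation~\eqref{def:sk}). I then apply Proposition~\ref{rem:easy_str_cvxty}(i) at the feasible point $x=x_k$ with the subgradient $s=s_k\in\partial h(x_k)$, which yields $\normsq{x_k-x_*}\le\mu^{-2}\normsq{\nabla f(x_k)+s_k}$. Chaining the two inequalities gives (i). Statement (ii) follows identically, invoking Proposition~\ref{rem:easy_str_cvxty}(ii) in place of part~(i): $F(x_k)-F_*\le(2\mu)^{-1}\normsq{\nabla f(x_k)+s_k}\le(2\mu)^{-1}\rho^{2k}(\gamma)\normsq{\nabla f(x_0)+s_0}$.

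For (iii), the starting point is instead the objective-function-accuracy case of Theorem~\ref{thm:conv_PGM_gen}, which requires only $F(x_0)<\infty$ and gives $F(x_k)-F_*\le\rho^{2k}(\gamma)(F(x_0)-F_*)$. Applying Proposition~\ref{rem:easy_str_cvxty}(iii) at $x=x_k$ gives $\normsq{x_k-x_*}\le\frac{2}{\mu}(F(x_k)-F_*)$, and composing the two bounds produces (iii).

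I expect no genuine obstacle: the argument is a short composition and needs no new inequalities or performance-estimation computations. The only point requiring a little care is bookkeeping on which subgradient is used where --- specifically, that the $s_k$ appearing in the residual-gradient statement of Theorem~\ref{thm:conv_PGM_gen} is precisely the subgradient of $h$ at $x_k$ fed into Proposition~\ref{rem:easy_str_cvxty}, and that $\nabla f(x_0)+s_0$ is a legitimate choice of $\tilde{\nabla}F(x_0)$ so that the normalization in the claimed bounds matches the normalization in Theorem~\ref{thm:conv_PGM_gen}.
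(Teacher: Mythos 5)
Your proposal is correct and follows exactly the paper's own (one-line) proof, which simply combines Theorem~\ref{thm:conv_PGM_gen} with Proposition~\ref{rem:easy_str_cvxty}; your version merely spells out the subgradient bookkeeping (taking $\tilde{\nabla}F(x_0)=\nabla f(x_0)+s_0$ and applying the proposition at $x=x_k$ with $s=s_k$), which is a welcome but not essentially different elaboration.
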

\begin{proof}
Combine results of Theorem~\ref{thm:conv_PGM_gen} with those of Proposition~\ref{rem:easy_str_cvxty}.
\end{proof}
It turns out that the global bounds provided by the previous theorem are tight for the shorter {step sizes $0\leq \gamma\leq \frac{2}{L+\mu}$ (thus including $\gamma=\frac{1}{L}$)}. This comes from the fact that 
PGM applied on the quadratic function $f_\mu(x)$ from Section~\ref{sec:quadLB} satisfies (i),(ii) and (iii) with equality. However, we observed that the guarantees of Theorem~\ref{thm:convg_GM_for_LS} are no longer tight (i.e., they are conservative) for larger step sizes. 

All known exact global convergence guarantees for (possibly mixed) combinations performance measures are summarized in Table~\ref{tab1:exactres} below. \rev{Note that for some combinations of performance measures, no analytical \emph{global and tight} convergence guarantees are known yet.}
{\renewcommand{\arraystretch}{1.5}
\begin{table}[h!]

\begin{center}{ \caption{Summary of the global convergence guarantees proposed by Theorem~\ref{thm:conv_PGM_gen} (exact, diagonal entries) and Theorem~\ref{thm:convg_GM_for_LS} (exact for $0\leq \gamma\leq \frac{2}{L+\mu}$, off-diagonal entries). 
		}\label{tab1:exactres}
        \begin{tabular}{@{}lrrr@{}}\specialrule{2pt}{1pt}{1pt}
       Initialization & $\norm{x_0-x_\star}^2$ &  $F(x_0)-F_\star\ $ & $\norm{\nabla f(x_0)+s_0}^2 $\\
       \hline
       $\norm{x_k-x_\star}^2\leq$ & $\rho^{2k} \norm{x_0-x_\star}^2$ &   ${\frac{2}{\mu}}\rho^{2k} (F(x_0)-F_\star)$ & $\frac{1}{\mu^2}\rho^{2k} \norm{\nabla f(x_0)+s_0}^2$\\
       $F(x_k)-F_\star\leq$ & \rev{(unknown)}  & $\rho^{2k} (F(x_0)-F_\star) $ & $ \frac{1}{2\mu}\rho^{2k}\norm{\nabla f(x_0)+s_0}^2$\\
       $||\tilde{\nabla} F(x_k)||^2\leq$ & \rev{(unknown)}     & \rev{(unknown)}   & $\quad  \ \rho^{2k} \norm{\nabla f(x_0)+s_0}^2$\\
       \specialrule{2pt}{1pt}{1pt}
        \end{tabular}}
    \end{center}
        \end{table}}
        
We now focus on the case of step size $\gamma=\frac1L$ and report lower bounds (corresponding to the performance of some explicitly identified functions of the form~\eqref{eq:SSOpt}) for all combinations of performance measures in Table~\ref{tab1:exactres_1L}. We conjecture those lower bounds  to be tight, as they appear to numerically match the true worst-case values obtained using the performance estimation framework~\cite{Article:Drori,taylor2015smooth,taylor2015exact,deKlerkELS2016,drori2014contributions}
(i.e., the lower bounds numerically match the computed worst cases for all tested values of $\mu,L$ and $\gamma$).
For the cases marked by a star ($\star$), the instances of \eqref{eq:SSOpt} that provided the lower bound have the following form:
\[\min_{x\geq 0} \rev{\left(\frac\mu2 \normsq{x}+cx\right)},\]
with appropriate values for $c\in \mathbb{R}$ (see \ref{app:lowerbounds} for details). For the other cases, the lower bounds are quadratics (see Section~\ref{sec:quadLB}).
 {\renewcommand{\arraystretch}{1.5}
\begin{table}[h!]
\begin{center}{ \caption{Summary of the lower bounds obtained for the case $\gamma=1/L$, conjectured to be exact.}\label{tab1:exactres_1L}
        \begin{tabular}{@{}lrrr@{}}\specialrule{2pt}{1pt}{1pt}
       Initialization & $\norm{x_0-x_\star}^2$ &  $F(x_0)-F_\star $ & $\norm{\nabla f(x_0)+s_0}^2 $\\
       \hline
       $\norm{x_k-x_\star}^2\leq$ & $\rho^{2k} \norm{x_0-x_\star}^2$ &   ${\frac{2}{\mu}}\rho^{2k} (F(x_0)-F_\star)$ & $\frac{1}{\mu^2}\rho^{2k} \norm{\nabla f(x_0)+s_0}^2$\\
       $F(x_k)-F_\star\leq$ & $\frac{\mu}{2} \frac{\norm{x_0-x_\star}^2}{\rho^{-2k}-1}$ ($\star$)& $\rho^{2k} (F(x_0)-F_\star) $ & $ \frac{1}{2\mu}\rho^{2k}\norm{\nabla f(x_0)+s_0}^2$\\
       $||\tilde{\nabla} F(x_k)||^2\leq$ & $\frac{\mu^2\norm{x_0-x_\star}^2}{(\rho^{-k}-1)^2}$  ($\star$) &   ${2\mu \frac{F(x_0)-F_\star}{\rho^{-2k}-1}}$ ($\star$) & $\rho^{2k} \norm{\nabla f(x_0)+s_0}^2$\\
       \specialrule{2pt}{1pt}{1pt}
        \end{tabular}}
    \end{center} 

        \end{table}}
        
Table~\ref{tab1:exactres_1L_mu0} below presents bounds obtained from Table~\ref{tab1:exactres_1L} when taking the limit $\mu\rightarrow 0$, i.e., for smooth convex functions that are not strongly convex. The corresponding bounds were also validated using the performance estimation framework~\cite{taylor2015smooth,taylor2015exact}. Indeed, it can be expected that tight bounds valid in the case $\mu > 0$ lead to exact bounds in the limiting case $\mu=0$ (we also point out that many non-tight bounds found in the literature do not behave properly when taking the $\mu\to 0$ limit). We observe that only trivial bounds are valid in the non-mixed cases (diagonal entries), which emphasizes the usefulness of mixed measures for that setting. Case $(\star\star)$ is proved for the projected gradient method in~\cite[Chapter 2]{drori2014contributions}. Note that several cases in Table~\ref{tab1:exactres_1L_mu0} are marked as \emph{unbounded}. Those cases correspond to settings in which guarantees can be made arbitrarily bad. As an example, consider a family of problems of the form: \[\min_{x\geq 0} cx,\] with $x\in\mathbb{R}$. By adequately tuning $c>0$ (i.e., making it arbitrarily small), one can recover the three unbounded cases from Table~\ref{tab1:exactres_1L_mu0}.
   {\renewcommand{\arraystretch}{1.5}
\begin{table}[h!]
\begin{center}{ \caption{Bounds from Table~\ref{tab1:exactres_1L} when taking the limit $\mu\rightarrow 0$.}\label{tab1:exactres_1L_mu0}
        \begin{tabular}{@{}lrrr@{}}\specialrule{2pt}{1pt}{1pt}
       Initialization & $\norm{x_0-x_\star}^2$ &  $F(x_0)-F_\star $ & $\norm{\nabla f(x_0)+s_0}^2 $\\
       \hline
       $\norm{x_k-x_\star}^2\leq$ & $\norm{x_0-x_\star}^2$ &   Unbounded & Unbounded\\
       $F(x_k)-F_\star\leq$ & ${L} \frac{\norm{x_0-x_\star}^2}{4k}$ ($\star\star$)  & $F(x_0)-F_\star$ & Unbounded\\
       $||\tilde{\nabla} F(x_k)||^2\leq$ & $L^2\frac{\norm{x_0-x_\star}^2}{k^2}$   ($\star$) &  ${L\frac{F(x_0)-F_\star}{k}}$ ($\star$) & $ \norm{\nabla f(x_0)+s_0}^2$\\
       \specialrule{2pt}{1pt}{1pt}
        \end{tabular}}
    \end{center}
    
        \end{table}}

Finally, note that the results presented in Table~\ref{tab1:exactres}, Table~\ref{tab1:exactres_1L}, and Table~\ref{tab1:exactres_1L_mu0} can be compared with the complexity results for solving linear systems as presented in the work of Nemirovski~\cite{nemirovsky1992information} (corresponding to the case of $f$ being quadratic and $h$ being identically zero). In particular,~\cite[Section 4]{nemirovsky1992information} emphasizes the fact that, for the degenerate case $\mu=0$, only the cases $(\star)$ \rev{and $(\star\star)$} can provide nontrivial worst-case convergence results.

\section{Conclusions}\label{sec:ccl}
The main contribution of this work is to close the gap between lower and upper complexity bounds for the proximal gradient method for strongly convex composite minimization problems. 

Our proof methodology allows a clear and transparent use of the assumptions of the theorems. As an example, we observed that strong convexity was only required between certain pairs of points (between consecutive iterates, or between iterates and optimum).

Tight convergence results are still open for a variety of first-order methods and different convergence measures, as for example for accelerated schemes~\cite{Book:Nesterov}, for inexact methods~\cite{schmidt2011convergence,devolder2014first} and for coordinate descent schemes~\cite{nesterov2012efficiency}. Obtaining such tight convergence results opens the door for a better use of gradient schemes as primitive operations in more involved algorithms, but also for designing optimized first-order methods (such research directions are carried out among others in~\cite{Article:Drori,kim2014optimized} for the smooth unconstrained convex case and in~\cite{lessard2014analysis} in the strongly convex case  when using a noisy gradient
), and the corresponding lower bounds~\cite{drori2016exact}.
Finally, the performance estimation framework provides guarantees that are valid for general Euclidean norms (also with primal-dual structures, see~\cite[Section 1.1]{taylor2015exact}), but it remains open to extend the methodology for handling more general norms.

\paragraph{Code} In order to ease the verification process, codes for symbolically verifying the main proofs of the paper are available from:\\ {\url{https://github.com/AdrienTaylor/ProximalGradientMethod}}\\	
All results can also be numerically verified using the Performance Estimation Toolbox~\cite{taylorperformance}, that can be downloaded from the following location:\\ {\url{https://github.com/AdrienTaylor/Performance-Estimation-Toolbox}}

{\section*{Appendices}}
\appendix
\renewcommand\thesubsection{Appendix \Alph{subsection}}
\begin{spacing}{1.0}
\subsection{- Details of the Proof of Theorem~\ref{thm:convg_GM_x}}\label{app:proofhelper}
In this section, we provide some details on the verification of the proof of Theorem~\ref{thm:convg_GM_x} (convergence in distance to optimality). The proofs of Theorem~\ref{thm:convg_GM_G} (convergence in residual gradient norm) and Theorem~\ref{thm:convg_GM_f} (convergence in function value) follow the same lines, although the results in function values are technically more involved (we advise the reader to use appropriate computer algebra software to preserve his sanity).

As the proofs for both regimes (small and large step sizes) follow the same lines, we only consider the case $0\leq \gamma\leq \frac{2}{L+\mu}$ here.

The goal is to prove that the inequality
\begin{equation}
\begin{aligned}
\left(1-\gamma \mu \right)^2 \normsq{x_k-x_\star} \geq &\normsq{x_{k+1}-x_\star}  +\gamma^2 \normsq{g_\star+s_{k+1}}\\&+\frac{\gamma(2-\gamma (L+\mu))}{L-\mu} \normsq{\mu {(x_k  -x_\star)} - g_k+  g_\star},
\end{aligned}\label{app:eq:target_expr}
\end{equation}
can be obtained by performing a weighted sum of the following inequalities:
\begin{align*}
&&&\begin{array}{ll}
f_\star\geq f_k&+\inner{g_k}{x_\star-x_k}+\frac{1}{2L}\normsq{g_k-g_\star} \\ & +\frac{\mu}{2(1-\mu/L)}\normsq{x_k-x_\star-\frac{{1}}{L}(g_k-g_\star)}
\end{array}
\quad &&:2\gamma\rho(\gamma),  \\
&&&\begin{array}{ll}
f_k\geq f_\star&+\inner{g_\star}{x_k-x_\star}+\frac{1}{2L}\normsq{g_k-g_\star} \\ &+\frac{\mu}{2(1-\mu/L)}\normsq{x_k-x_\star-\frac{{1}}{L}(g_k-g_\star)}
\end{array}
\quad &&:2\gamma\rho(\gamma),\\
&&&\begin{array}{ll}
h_\star\geq h_{k+1}+\inner{s_{k+1}}{x_\star-x_{k+1}} & \end{array}\quad &&:2\gamma,\\
&&&\begin{array}{ll}h_{k+1}\geq h_\star + \inner{s_\star}{x_{k+1}-x_\star} & \end{array} \quad &&:2\gamma.
\end{align*}
For simplicity, let us first sum the previous inequalities two by two:
\begin{align*}
&&&\begin{array}{ll}
0\geq -\inner{g_\star-g_k}{x_\star-x_k} +\frac{1}{L}\normsq{g_k-g_\star} +\frac{\mu}{1-\mu/L}\normsq{x_k-x_\star-\frac{{1}}{L}(g_k-g_\star)}
\end{array}
\quad &&:2\gamma\rho(\gamma),  \\
&&&\begin{array}{ll}
0\geq -\inner{s_\star-s_{k+1}}{x_\star-x_{k+1}} & \end{array}\quad &&:2\gamma.
\end{align*}
We proceed by showing that~\eqref{app:eq:target_expr} can obtained by reformulation of the following expression:
\begin{equation}
\begin{aligned}
0\geq &2\gamma\rho(\gamma)\left[\inner{g_k-g_\star}{x_\star-x_k} +\frac{1}{L}\normsq{g_k-g_\star} +\frac{\mu}{1-\mu/L}\normsq{x_k-x_\star-\frac{{1}}{L}(g_k-g_\star)}\right]\\
&+2\gamma  \inner{s_{k+1}-s_\star}{x_\star-x_{k+1}}.
\end{aligned}\label{app:eq:orig_expr}
\end{equation}
For doing that, we simply verify that the expression~\eqref{app:eq:target_expr} minus the expression~\eqref{app:eq:orig_expr} is identically zero; that is:
\begin{equation}
\begin{aligned}
0= &2\gamma\rho(\gamma)\left[\inner{g_k-g_\star}{x_\star-x_k} +\frac{1}{L}\normsq{g_k-g_\star} +\frac{\mu}{1-\mu/L}\normsq{x_k-x_\star-\frac{{1}}{L}(g_k-g_\star)}\right]\\
&+2\gamma  \inner{s_{k+1}-s_\star}{x_\star-x_{k+1}}\\
&-\left[-\left(1-\gamma \mu \right)^2 \normsq{x_k-x_\star}+\normsq{x_{k+1}-x_\star}  +\gamma^2 \normsq{g_\star+s_{k+1}}\right]\\
&-\frac{\gamma(2-\gamma (L+\mu))}{L-\mu} \normsq{\mu {(x_k  -x_\star)} - g_k+  g_\star},
\end{aligned}\label{app:eq:verif}
\end{equation}
with $x_{k+1}=x_k-\frac{1}{L}\left(g_k+s_{k+1}\right)$ and $s_\star=-g_\star$. 

Finally, one can simply verify the equality~\eqref{app:eq:verif} by expanding~\eqref{app:eq:target_expr} and~\eqref{app:eq:orig_expr}, which are both equal to:
\begin{equation*}
\begin{aligned}
0\geq \frac{2}{L-\mu }&\left((\gamma -\gamma ^2\mu)  \normsq{g_k} +{(\gamma ^2\mu+\gamma ^2 L-2 \gamma   )\inner{g_k}{g_\star} }\right. \\&+{( \gamma ^2 L-\gamma^2\mu) \inner{g_k}{s_{k+1}}}+{(\gamma ^2 \mu ^2+\gamma ^2  \mu  L-\gamma  L -\gamma \mu) \inner{g_k}{x_k}}\\&+{(- \gamma ^2 \mu ^2-\gamma ^2\mu L +\gamma L+\gamma \mu)\inner{g_k}{x_\star}}+{(\gamma-\gamma ^2 \mu) \normsq{g_\star} }\\&+{( \gamma ^2  L-\gamma ^2 \mu  )\inner{g_\star}{s_{k+1}}}+{(2\gamma \mu- \gamma ^2 \mu ^2-\gamma ^2  \mu  L )\inner{g_\star}{ x_k}}\\&+{( \gamma ^2 \mu ^2+\gamma ^2\mu  L-2 \gamma\mu) \inner{g_\star }{x_\star}}+{( \gamma ^2 L-\gamma ^2 \mu) \normsq{s_{k+1}}}\\&+{(\gamma  \mu-\gamma  L )  \inner{s_{k+1}}{ x_k}}+{(\gamma  L-\gamma  \mu) \inner{s_{k+1}}{ x_\star}}\\&+{(\gamma  \mu  L -\gamma ^2 \mu ^2 L) \normsq{x_k}}+{(2 \gamma ^2 \mu ^2 L-2 \gamma  \mu  L) \inner{x_k}{ x_\star}}\\&\left.+{( \gamma  \mu  L-\gamma ^2 \mu ^2 L) \normsq{x_\star}}\right).
\end{aligned}
\end{equation*}
\qed

Note that all proofs presented in the symbolic validation code rely on the exact same idea: the equivalences between pairs of expressions are verified by checking their differences being identically zero.

\subsection{- Details on Lower Bounds for Mixed Performance Measures}\label{app:lowerbounds}
In this section, we provide details for obtaining the lower bounds marked with ($\star$) in Table~\ref{tab1:exactres_1L} (i.e., those that do not come from purely quadratic functions).

First, consider two constants $0<\mu\leq L<\infty$ and the following one-dimensional quadratic minimization problem
\[\min_{x\geq 0} \left(\frac\mu2 x^2+cx\right).\]
The function $f(x)=\frac\mu2 x^2+cx$ is clearly $L$-smooth and $\mu$-strongly convex with unique optimal point $x_\star=0$ over the nonnegative reals. A corresponding composite problem can be written as $F(x)=f(x)+i_{\geq 0}(x)$ with $i_{\geq 0}(.)$ being the indicator function for the nonnegative real half-line.

Second, consider the starting point $x_0\geq 0$ and a number of iterations $N\in\mathbb{N}$. Using the proximal gradient method with step size $\frac{1}{L}$ results in the following rule for the iterates, assuming $c$ is small enough (i.e., such that $x_k\geq 0$ for all $0\leq k \leq N$):
\begin{align*}
x_{k+1}&=x_k-\frac1L \nabla f(x_k),\\
&=\left(1-\frac\mu L\right)x_k-\frac c L.
\end{align*}
Solving the recurrence equation provides us with the following rule for the iterates:
\[x_{k}=
\rev{\frac{(1-\kappa )^N (c+\mu x_0)-c}{\mu}},
\]
\rev{with $\kappa\defeq \frac{\mu}{L}$ the (inverse) condition number, }and the corresponding values:
\begin{align*}
&\nabla f(x_N)=\rev{(1-\kappa )^N (c+\mu x_0)},\\
&f(x_N)-f(x_\star)=\rev{\frac{(1-\kappa )^{2 N} (c+\mu x_0)^2}{2 \mu}.}
\end{align*}
By optimizing over $c$, we get the following extreme cases:
\begin{itemize}
	\item (quadratic optimization --- maximize $f(x_N)-f(x_\star)$ with respect to $c$) $c=\frac{\mu x_0}{(1-\kappa )^{-2N}-1}$ results in $F(x_N)-F(x_\star)=\frac{\mu}{2} \frac{\norm{x_0-x_\star}^2}{\rho^{-2N}-1}$,
	\item (linear optimization --- maximize $\lvert \nabla f(x_N)\rvert$ with respect to $c$ by \rev{enforcing equality in the constraint} $x_N\geq 0$\rev{; that is, we impose $x_N=0$}) $c=\frac{\mu x_0}{(1-\kappa )^{-N}-1}$ results in $||\tilde{\nabla} F(x_k)||^2=\frac{\mu^2\norm{x_0-x_\star}^2}{(\rho^{-N}-1)^2}$,
	\item (linear optimization --- maximize $\lvert \nabla f(x_N)\rvert$ with respect to $c$ by \rev{enforcing equality in the constraint} $x_N\geq 0$\rev{; that is, we impose $x_N=0$}) $c=\frac{\mu x_0}{(1-\kappa )^{-N}-1}$ (or equivalently $c=\frac{\sqrt{2\mu} \sqrt{F(x_0)-F_\star}}{\sqrt{(1-\kappa )^{-2N}-1}}$) results in $||\tilde{\nabla} F(x_N)||^2={2\mu \frac{F(x_0)-F_\star}{\rho^{-2N}-1}}$,
\end{itemize}
which match the entries marked \rev{($\star$)} in Table~\ref{tab1:exactres_1L}.
\end{spacing}

{\begin{spacing}{1.0}
\acknowledgement This paper
presents research results of the Belgian Network DYSCO (Dynamical Systems, Control, and Optimization), funded by the Interuniversity Attraction Poles Programme, initiated by the Belgian State, Science Policy Office, and of the Concerted Research Action (ARC) programme supported by the Federation Wallonia-Brussels (contract ARC 14/19-060). The scientific responsibility rests with its authors.

The authors also thank the anonymous referee for his very careful reading and comments, that include Remark~\ref{remark:fromref}.
\end{spacing}

\bibliographystyle{spmpsci_unsrt}      
\bibliography{bib_}{}   

\end{document}